\newtheorem{Theorem}{Theorem}[section]
\newtheorem{Proposition}[Theorem]{Proposition}
\newtheorem{Lemma}[Theorem]{Lemma}
\newtheorem{Corollary}[Theorem]{Corollary}
\theoremstyle{definition}
\newtheorem{Definition}{Definition}[section]
\theoremstyle{definition}
\newtheorem{Hypothesis}{Hypothesis}[section]
\theoremstyle{remark}
\numberwithin{equation}{section}
\newcommand{\R}{{\mathbb R}}
\newcommand{\C}{{\mathbb C}}
\newcommand{\SL}{{\textrm{\rm SL}}}
\newcommand{\PSL}{{\textrm{\rm PSL}}}
\newcommand{\tr}{{\textrm{\rm tr}\:}}
\renewcommand{\Im}{{\textrm{\rm Im}\:}}
\begin{document}

\title[Reflectionless operators]{Reflectionless Dirac operators and canonical systems}

\author{Christian Remling}

\address{Department of Mathematics\\
601 Elm Ave.\\
University of Oklahoma\\
Norman, OK 73019}
\email{christian.remling@ou.edu}
\urladdr{www.math.ou.edu/$\sim$cremling}

\author{Jie Zeng}

\address{Department of Mathematics\\
3900 University Blvd.\\
University of Texas at Tyler\\
Tyler, TX 75799}

\email{jzeng@uttyler.edu}

\date{October 25, 2024}

\thanks{2020 {\it Mathematics Subject Classification.} 34B20 34L40 81Q10}

\keywords{Canonical system, Dirac operator, reflectionless operator}

\begin{abstract}
We study canonical systems that are reflectionless on an open set. In this situation, the two half line $m$ functions are holomorphic continuations
of each other and may thus be combined into a single holomorphic function. This idea was explored in \cite{HMcBR}, and we continue these investigations here.
We focus on Dirac operators and especially their interplay with canonical systems, and we provide a more general and abstract framework.
\end{abstract}
\maketitle
\section{Introduction}
We investigate reflectionless Dirac operators and canonical systems in this paper.
A \textit{canonical system }is a differential equation of the form
\begin{equation}
\label{can}
Ju'(x) = -zH(x)u(x) , \quad J=\begin{pmatrix} 0 & -1 \\ 1 & 0 \end{pmatrix} ,
\end{equation}
with a locally integrable coefficient function $H(x)\in\R^{2\times 2}$, $H(x)\ge 0$, $\tr H(x)=1$.
This last condition is a normalization. Something of this sort is needed if one wants to have a bijection
between canonical systems and spectral data, but the fine details are largely arbitrary. The trace normalization is a common choice,
but in the context of our discussion of Dirac operators below, it will actually be more convenient to normalize the determinant:
if $\det H(x)>0$ for all $x\in\R$, then there is a unique version of $H$, with the same $m$ functions, as defined in \eqref{defm} below, that satisfies $\det H(x)=1$.
The general issue is discussed in more detail in \cite[Section 1.3]{Rembook}.

Canonical systems define self-adjoint relations and operators on the Hilbert spaces
\[
L^2_H(I) = \left\{ f: I\to\C^2 : \int_I f^*(x)H(x)f(x)\, dx < \infty \right\} .
\]
They are of fundamental importance in spectral theory because they may be used
to realize arbitrary spectral data \cite[Theorem 5.1]{Rembook}. We will use the notation $\mathcal C (I)$ for the collection
of canonical systems $H(x)$, $x\in I$. In line with what we just discussed, if two (differently normalized)
coefficient functions $H$ share the same $m$ functions, to be introduced below, in \eqref{defm}, we view them as representing the same element of $\mathcal C$.

\textit{Dirac equations }are, of course, more classical. We will work with the following general form:
\begin{equation}
\label{Dirac}
Jy'(x) + W(x)y(x) = -zy(x) .
\end{equation}
Here, we assume that $W(x)\in\R^{2\times 2}$, $W(x)=W^t(x)$, $W\in L^1_{\textrm{loc}}(I)$. We call such (matrix) functions
\textit{Dirac potentials; }as usual, we identify potentials that agree almost everywhere. It will again be convenient to have the
short-hand notation $\mathcal W (I)$ available for the collection of Dirac potentials.
The Dirac equation \eqref{Dirac} will generate self-adjoint operators on $L^2(I;\C^2)$.

Since, as we just mentioned, any spectral data can be realized by a unique canonical system, it must in particular be possible to do this
for those of the Dirac equations \eqref{Dirac}. We will discuss the procedure in detail in Section 2. So, in this sense, Dirac equations can be thought
of as special canonical systems.

A canonical system or Dirac equation on $x\in I=\mathbb R$ is called \textit{reflectionless }on a Borel set $A\subseteq\mathbb R$ if
\begin{equation}
\label{refless}
m_+(t)= -\overline{m_-(t)}
\end{equation}
for (Lebesgue) almost every $t\in A$. Here, $m_{\pm}$ are the \textit{Titchmarsh-Weyl }$m$ \textit{functions }of the half line problems on
$x\in [0,\infty)$ and $x\in (-\infty, 0]$, respectively. These functions are key tools in the spectral analysis of \eqref{can}, \eqref{Dirac}; please see
\cite[Chapter 3]{Rembook} for a detailed treatment. They may be defined as
\begin{equation}
\label{defm}
m_{\pm}(z) = \pm f_{\pm}(0,z) ,
\end{equation}
with $z\in\mathbb C^+=\{ z\in\mathbb C: \Im z >0 \}$ and $f_+=u$ denoting the
(unique, up to a factor) solution $f_+\in L_H^2(0,\infty)$ of \eqref{can}, and $f_-$ similarly denotes the solution that is square integrable on the left half line.
We also use the convenient convention of identifying a vector $f=(f_1,f_2)^t\in\mathbb C^2$, $f\not= 0$, with the point $f_1/f_2\in\mathbb C_{\infty}$
on the Riemann sphere. In particular, $m_{\pm}(z)\in\mathbb C_{\infty}$, and in fact the $m$ functions are generalized \textit{Herglotz functions, }that is,
they map the upper half plane $\mathbb C^+$ holomorphically back to $\overline{\mathbb C^+}=\C^+\cup\R\cup\{\infty \}$.

If specifically $H(x)\equiv P_{\alpha}$ on a half line,
let's say on $x\ge 0$, with $P_{\alpha}$ denoting the projection onto $e_{\alpha}=(\cos\alpha, \sin\alpha)^t$, then $m_+(z)\equiv -\tan\alpha$.
If $H$ is not of this special type, then the $m$ functions are genuine Herglotz functions, that is, they map $\C^+$ back to itself holomorphically.
Herglotz functions have boundary values $m(t)=\lim_{y\to 0+} m(t+iy)$ at almost all $t\in\mathbb R$, and we are referring to these in \eqref{refless}.

If $H(x)\equiv P_{\alpha}$ on $x\in\R$, then $m_{\pm}(z)\equiv \mp\tan\alpha$, and $H$ is reflectionless on $\R$ according to our definition \eqref{refless}.
It will be convenient to have the short-hand notation
\begin{equation}
\label{1.93}
\mathcal Z = \{ H\equiv P_{\alpha} : 0\le\alpha<\pi \}
\end{equation}
available to refer to the collection of these trivial canonical systems. Observe also that if $H$ is reflectionless on any positive measure set and $H\notin\mathcal Z$,
then both half line $m$ functions are genuine Herglotz functions.

Almost literally the same definitions may be used for Dirac equations. We again define the half line $m$ functions by \eqref{defm},
with $f_{\pm}=y$ now denoting square integrable solutions of \eqref{Dirac}. These $m$ functions are genuine Herglotz functions;
the degenerate functions $m\equiv a\in\R_{\infty}$ do not occur for Dirac operators.

Reflectionless operators are important because they can be thought of as the basic building blocks of arbitrary operators with
some absolutely continuous spectrum; compare \cite{RemAnn}, \cite[Chapter 7]{Rembook}. Let's introduce
\[
\mathcal R (A) =\{ H\in\mathcal C(\R): H \textrm{ is reflectionless on }A\} .
\]
In this paper, we are interested in operators that are reflectionless on an \textit{open }set $A=U\subseteq\R$. In this case, the half line $m$ functions
are holomorphic continuations of each other through $U$. More precisely, we have the following result.
\begin{Lemma}
\label{L1.1}
Let $U\subseteq\R$ be a non-empty open set, and assume that $H\in\mathcal R (U)$. Then the function
\[
M(z) = \begin{cases} m_+(z) & z\in\C^+ \\ -\overline{m_-(\overline{z})} & z\in\C^- \end{cases}
\]
has a holomorphic continuation to $\Omega\equiv\C^+\cup U\cup\C^-$.
\end{Lemma}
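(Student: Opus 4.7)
The plan is to glue the two half-plane pieces by a Cauchy integral argument. By construction, $M$ is already holomorphic on $\C^+\cup\C^-$: the function $m_+$ is Herglotz on $\C^+$, while $g(z):=-\overline{m_-(\bar z)}$ is holomorphic on $\C^-$ by Schwarz reflection applied to the Herglotz function $m_-$. Since $m_\pm$ admit non-tangential boundary values a.e.\ on $\R$, the reflectionless hypothesis \eqref{refless} says precisely that the boundary values of $M$ from above and below agree at almost every $t\in U$.

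I would then work locally. Fix $t_0\in U$, pick an interval $[a,b]\subset U$ with $t_0\in(a,b)$, and pick $\epsilon>0$ small enough that the closed rectangle $R=[a,b]\times[-\epsilon,\epsilon]$ lies in $\Omega$. For $w$ in the interior of $R$ with $\Im w>0$ and any $0<\delta<\Im w$, the Cauchy integral formula applied to $m_+$ on the sub-rectangle $[a,b]\times[\delta,\epsilon]$ expresses $m_+(w)$ as a contour integral over that sub-rectangle, and Cauchy's theorem applied to $g$ on $[a,b]\times[-\epsilon,-\delta]$ adds a vanishing contour integral. Summing and passing to the limit $\delta\downarrow 0$, the two inner horizontal edges combine into
\[
\frac{1}{2\pi i}\int_a^b\frac{m_+(t)-g(t)}{t-w}\,dt = \frac{1}{2\pi i}\int_a^b\frac{m_+(t)+\overline{m_-(t)}}{t-w}\,dt = 0
\]
by \eqref{refless}, leaving
\[
M(w)=\frac{1}{2\pi i}\oint_{\partial R}\frac{M(z)}{z-w}\,dz.
\]
The right-hand side is holomorphic in $w$ throughout the interior of $R$ (including on the segment $(a,b)$), and hence supplies the desired holomorphic continuation of $M$ across a neighborhood of $t_0$. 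A symmetric computation covers $\Im w<0$, so the formula agrees with the original $M$ off $\R$. Since $t_0\in U$ is arbitrary, $M$ extends holomorphically to all of $\Omega$.

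The main technical obstacle is justifying the $\delta\downarrow 0$ limit of the two horizontal-edge integrals, which requires a local $L^1$-type control of $m_\pm(\cdot+i\delta)$ on $[a,b]$ uniform in small $\delta$. This can be extracted from the Herglotz representations of $m_\pm$ together with the observation that \eqref{refless} forces the singular part of the spectral measure of $m_\pm$ to assign no mass to subsets of $U$; consequently $m_\pm(\cdot+i\delta)\to m_\pm(\cdot)$ in $L^1_{\mathrm{loc}}(U)$ as $\delta\downarrow 0$, and dominated convergence in the Cauchy kernel then applies on $[a,b]$ for $w$ bounded away from it. An alternative, perhaps tidier, route is to transfer to the unit disc via a Cayley transform---so that $m_\pm$ become bounded holomorphic functions---and then invoke a standard $H^\infty$ boundary-gluing lemma there.
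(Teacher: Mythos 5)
The paper does not include its own proof of Lemma~\ref{L1.1}; it cites Lemma~2.1 of \cite{HMcBR}, so there is no in-text argument to compare against line by line. Assessing your proposal on its own terms: the Cauchy-rectangle strategy is reasonable, but the justification of the $\delta\downarrow 0$ limit has two gaps. First, the assertion that the reflectionless condition on $U$ already forces the singular parts of the measures of $m_\pm$ to vanish on $U$ is true, but it is not a quick consequence of the Herglotz representations; in this paper the absolute continuity on $U$ is in fact \emph{derived from} the conclusion of Lemma~\ref{L1.1} (see the proof of Theorem~\ref{T3.2}), so invoking it here without an independent argument is close to circular. Second, and more seriously, even granting absence of singular mass on $U$, the claimed convergence $m_\pm(\cdot+i\delta)\to m_\pm$ in $L^1_{\mathrm{loc}}(U)$ does not follow: the imaginary parts do converge in $L^1_{\mathrm{loc}}$ (uniform integrability of Poisson integrals once singular mass is ruled out), but the real parts are conjugate functions, and conjugation maps $L^1$ only into weak-$L^1$, not into $L^1$. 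So the dominated convergence you need for the inner-edge integrals is not available in this form.

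The alternative you flag at the end is the correct fix and should be promoted to the main argument. After discarding the trivial case $H\in\mathcal Z$ (where $M$ is a real constant and extends for free), observe that $M$ maps \emph{both} half planes into $\C^+$: on $\C^-$ one has $\Im M(z)=\Im m_-(\bar z)>0$. Hence $\tilde M=(M-i)/(M+i)$ is holomorphic on $\C^+\cup\C^-$ with $|\tilde M|\le 1$, and your rectangle-plus-Cauchy computation now runs without any difficulty, with $|\tilde M|\le 1$ supplying the dominating function and the reflectionless condition (transferred to $\tilde M$) killing the inner-edge contribution. This extends $\tilde M$ holomorphically across $U$. One small point remains: $M=i(1+\tilde M)/(1-\tilde M)$ is then a priori only \emph{meromorphic} on $\Omega$, with possible poles on $U$ where $\tilde M=1$. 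These must be excluded, and they can be: at such a $t_0\in U$, $m_+$ would have an isolated singularity on $\R$, which for a Herglotz function must be a simple pole with negative residue; the relation $M(z)=-\overline{m_-(\bar z)}$ then forces $m_-$ to have a simple pole at $t_0$ with positive residue, contradicting the Herglotz property of $m_-$. With these two additions your alternative route is complete.
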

Here, $\C^-=\{z: \Im z<0\}$ of course denotes the lower half plane.

Lemma \ref{L1.1} certainly looks plausible and is well known.
However, there is slightly more to it than meets the eye since \eqref{refless} is only imposed almost everywhere, and we do need to make sure
that the exceptional null set is empty when $U$ is open. The (uncomplicated) detailed proof of Lemma \ref{L1.1} may be found in \cite{HMcBR},
where the result is formulated as Lemma 2.1.

Lemma \ref{L1.1} has a converse of sorts, which is in fact obvious.
Let's spell this out in more detail. Let $H\in\mathcal C[0,\infty)$ and suppose that $m_+(\cdot ;H):\C^+\to\overline{\C^+}$ has a holomorphic
continuation $M:\Omega\to\overline{\C^+}$. Then $H\in\C[0,\infty)$ is the right half line of a unique $H\in\mathcal R(U)$.

A weaker version of this condition could ask for the existence of such a holomorphic continuation of $m_+$, still mapping to $\overline{\C^+}$, to \textit{some }larger domain,
not necessarily all of $\Omega$. We will later focus on $U=\R\setminus [-1,1]$, and then $\Omega$ has a puncture
at $\infty$, which we can plug, and it is exactly the neighborhood of this point that is of greatest interest, so we specialize to this situation right away.
\begin{Hypothesis}
\label{H1.1}
The half line $m$ function $m_+$ has a holomorphic continuation to a neighborhood of $z=\infty$ that still maps to $\overline{\C^+}$.
More explicitly, there are a disk $D_r=\{|z|<r\}$ and a holomorphic function $M: D_r\to\overline{\C^+}$ such that $m_+(z)=M(-1/z)$
for all $z\in\C^+$, $|z|>1/r$.
\end{Hypothesis}
\begin{Lemma}
\label{L1.2}
Suppose that $H\in\mathcal R(U)$ for some open set $U\subseteq\R$ with bounded complement. Then $m_+(z;H)$ satisfies Hypothesis \ref{H1.1}.
\end{Lemma}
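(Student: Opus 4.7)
The plan is to combine Lemma \ref{L1.1} with a removable-singularity argument at $\infty$. Since $\R\setminus U$ is bounded, I can choose $R>0$ with $\R\setminus U\subseteq[-R,R]$, which makes
\[
\Omega = \C^+\cup U\cup \C^- \supseteq \{z\in\C:|z|>R\} .
\]
Lemma \ref{L1.1} then supplies a (possibly meromorphic) continuation $M$ of $m_+$ to $\Omega$. Setting $r=1/R$, the biholomorphism $w\mapsto -1/w$ identifies $D_r\setminus\{0\}$ with $\{|z|>R\}$, so $\widetilde M(w):=M(-1/w)$ is meromorphic on $D_r\setminus\{0\}$, and $m_+(z)=\widetilde M(-1/z)$ for every $z\in\C^+$ with $|z|>1/r$. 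The task is then to extend $\widetilde M$ across $w=0$ with image still inside $\overline{\C^+}$.

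Before removing the singularity, I would check that $\widetilde M$ maps $D_r\setminus\{0\}$ into $\overline{\C^+}$. The map $w\mapsto -1/w$ preserves $\C^+$ and $\C^-$, so for $w\in\C^+\cap D_r$ we have $\widetilde M(w)=m_+(-1/w)\in\overline{\C^+}$ because $m_+$ is a generalized Herglotz function. For $w\in\C^-\cap D_r$, the definition of $M$ gives $\widetilde M(w)=-\overline{m_-(-1/\overline w)}$, and since $-1/\overline w\in\C^+$ the Herglotz property of $m_-$ forces $\Im\widetilde M(w)=\Im m_-(-1/\overline w)\ge 0$. Finally, for real $w\in(-r,r)\setminus\{0\}$ the point $-1/w$ lies in $U$, and holomorphic continuation identifies $M(-1/w)$ with the (a.e.)\ common boundary value $m_+(-1/w)\in\overline{\C^+}$.

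To close, fix any $w_0\in\C^-$ and form
\[
f(w) := \frac{1}{\widetilde M(w)-w_0} .
\]
The inequality $|\widetilde M(w)-w_0|\ge|\Im w_0|$, valid whenever $\widetilde M(w)\in\overline{\C^+}$ is finite (and $f=0$ at the discrete set of poles of $\widetilde M$), shows that $f$ is a bounded holomorphic function on $D_r\setminus\{0\}$. Riemann's removable-singularity theorem extends $f$ holomorphically across $0$, whence $\widetilde M=w_0+1/f$ extends meromorphically across $0$ with $\widetilde M(0)\in\overline{\C^+}$ by closedness of $\overline{\C^+}$ in $\C_\infty$. This is precisely Hypothesis \ref{H1.1}. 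The main conceptual point, and the only place one might be tempted to overwork, is that the a priori image constraint $\widetilde M\in\overline{\C^+}$ by itself suffices to rule out an essential singularity at $w=0$; no delicate analysis of the growth of Herglotz functions near the real axis (where the Herglotz integral representation gives only $|m(z)|\lesssim|z|^2/\Im z$) enters the argument.
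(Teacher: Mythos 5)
Your proposal is correct and is essentially the paper's own argument, which is simply: Lemma~\ref{L1.1} gives a holomorphic continuation of $m_+$ to a punctured neighborhood of $\infty$ taking values in $\overline{\C^+}$, so the isolated singularity at $\infty$ is removable. You have merely spelled out the routine verification that $\widetilde M$ maps into $\overline{\C^+}$ and substituted one standard proof of removability (passing to the bounded function $1/(\widetilde M - w_0)$) for the unnamed one the authors implicitly invoke.
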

\begin{proof}
Lemma \ref{L1.1} provides a holomorphic continuation $M$ of $m_+$ to a domain $\Omega$ that contains a punctured
neighborhood of $\infty$. Since $M$ maps to $\overline{\C^+}$, the singularity is removable.
\end{proof}
We originally got interested in Hypothesis \ref{H1.1} because this condition helped streamline some parts of our presentation.
However, these canonical systems also seem of independent interest. We will prove the following here, in Sections 5, 6.
\begin{Theorem}
\label{T1.2}
Let $H\in\mathcal C[0,\infty)$, and assume Hypothesis \ref{H1.1}. Then either $H\equiv P_{\alpha}$, or $\det H(x)=1$ for all $x\ge 0$, for a suitably
normalized version of $H$.

In the second case, $H(x)$ has the following additional properties: $H(x)$ is real analytic. More precisely, $H$ has a holomorphic
continuation $H(z)$ to a strip
\[
\{ z\in\C: |z-x|<h \textrm{\rm{ for some }}x>0\} .
\]
Moreover, there is a constant $C>0$ such that
\[
\|H^{-1/2}(x)H'(x)H^{-1/2}(x)\| \le C
\]
for all $x\ge 0$.
\end{Theorem}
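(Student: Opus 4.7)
The approach revolves around the Riccati equation for the right-shifted half-line $m$-function $m(x,z) := m_+(z; H(\cdot+x))$, namely
\[
\partial_x m(x,z) = -z\bigl(h_{11}(x)m^2 + 2h_{12}(x)m + h_{22}(x)\bigr).
\]
Passing to the inverted spectral variable $w=-1/z$ and setting $N(x,w) := m(x,-1/w)$ turns this into the regular equation
\[
w\,\partial_x N = h_{11}(x)N^2 + 2h_{12}(x)N + h_{22}(x).
\]
Hypothesis \ref{H1.1} says $N(0,\cdot)=M: D_r \to \overline{\C^+}$ is holomorphic. Setting $w=0$ yields the pointwise algebraic constraint
\[
h_{11}(x)\,N(x,0)^2 + 2h_{12}(x)\,N(x,0) + h_{22}(x) = 0,
\]
whose discriminant equals $-4\det H(x)$; so $N(x,0)\in\C^+$ forces $\det H(x)>0$.

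For the dichotomy I apply the minimum principle to the nonnegative harmonic function $\Im M$ on $D_r$. If $\Im M\equiv 0$, then $M\equiv c \in \R$ and $m_+\equiv c$ on $\C^+$ by analytic continuation; the unique canonical system with such a constant $m$ function is $H \equiv P_\alpha$ with $\tan\alpha=-c$, placing us in $\mathcal Z$. Otherwise $\Im M > 0$ on all of $D_r$, so $N(0,0)\in\C^+$ and $\det H(0)>0$. To propagate $N(x,0)\in\C^+$ to all $x\ge 0$, I must establish shift-invariance of Hypothesis \ref{H1.1}: for each $x\ge 0$, the shifted $m$-function $m(x,\cdot)$ again extends holomorphically at $\infty$ with values in $\overline{\C^+}$. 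I plan to obtain this by viewing $w\,\partial_x N = h_{11}N^2 + 2h_{12}N + h_{22}$ as a non-autonomous analytic ODE in $x$ with parameter $w\in D_r\setminus\{0\}$, where it is nondegenerate, and then showing that its solution extends continuously and holomorphically across $w=0$ with values in $\overline{\C^+}$ by exploiting the Herglotz property of $m(x,\cdot)$ in $z$. This propagation is the main technical obstacle. Once it is in hand, $\det H(x)>0$ for every $x\ge 0$, and we may renormalize so that $\det H \equiv 1$.

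With $\det H = 1$, combining the algebraic constraint with the determinant identity yields $(h_{11}N_0 + h_{12})^2 = -1$, hence $h_{11}N_0 + h_{12} = i$ using $h_{11}>0$ and $\Im N_0>0$. This gives the explicit formula
\[
H(x) = \frac{1}{\Im N_0(x)}\begin{pmatrix} 1 & -\Re N_0(x) \\ -\Re N_0(x) & |N_0(x)|^2\end{pmatrix}
\]
with $N_0(x) := N(x,0)$. Expanding $N(x,w) = \sum_{k\ge 0}N_k(x)w^k$ in $D_{r(x)}$ and matching the $w^1$ coefficient gives $N_0' = 2iN_1$, and inductively each $N_k$ is a rational expression in $N_0,N_0',\ldots,N_0^{(k)}$ and $1/\Im N_0$. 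The initial Taylor coefficients $N_k(0) = M^{(k)}(0)/k!$ decay like $(1/r)^k$ since $M$ is holomorphic on $D_r$. Converting back through the ODE hierarchy and invoking standard Cauchy-type estimates for analytic ODEs then yields a holomorphic continuation of $N_0(x)$ to a strip of uniform width $h>0$ about the positive real axis; the explicit formula above transports this analyticity to $H$. A Cauchy estimate on $H'(x)$ inside this strip, combined with the fact that $\det H = 1$ makes $H^{-1/2}$ controlled by $H^{1/2}$ via elementary $2\times 2$ algebra, gives the desired uniform bound $\|H^{-1/2}H'H^{-1/2}\|\le C$.
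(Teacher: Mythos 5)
Your route is genuinely different from the paper's: you work directly at the level of the canonical system and its Riccati equation, whereas the paper first uses the $\PSL(2,\R)$ action (Theorem \ref{T5.1}) to move to a Dirac system $g\cdot H\in\mathcal D$, writes $H=A^tH_dA$ with $H_d=T^tT$, and then imports real analyticity of $H_d$ from the Dirac version (Theorem \ref{T6.1}) because the transfer matrix $T$ solves a holomorphic ODE. Your explicit formula $H=(\Im N_0)^{-1}\left(\begin{smallmatrix}1 & -\Re N_0\\ -\Re N_0 & |N_0|^2\end{smallmatrix}\right)$ is correct and is a nice alternative to the $T^tT$ factorization. However, there are two substantive issues.

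First, the shift-invariance of Hypothesis \ref{H1.1}, which you correctly flag as the main obstacle, is not adequately addressed. Your plan to push through the singular equation $w\,\partial_xN=\ldots$ across $w=0$ by ``exploiting the Herglotz property'' is vague and, as stated, does not control the behavior near the degenerate parameter value. The paper's device (applied there in the Dirac setting, but applicable verbatim to canonical systems) is cleaner: $m_+(t;z)=T(t;z)\cdot m_+(0;z)$, and for $z\in\C^-\cup\R$ the transfer matrix acts on $\C^+$ as an automorphism, so $T(t;z)\cdot M(z)\in\C^+$ on the whole punctured neighborhood of $\infty$; the singularity at $\infty$ is then removable. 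Crucially this gives Hypothesis \ref{H1.1} for the \emph{same} $r>0$ uniformly in $t$, which you need and which your $D_{r(x)}$ notation suggests you are prepared to lose.

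Second, even granting propagation with uniform $r$, the step from ``each $N_k$ is a rational expression in $N_0,\dots,N_0^{(k)}$'' plus ``Cauchy-type estimates'' to a holomorphic continuation of $N_0$ on a strip of uniform width, with uniform bounds on all derivatives, is not a proof. The paper has to work for this: it derives a recursion $g_n'=\sum g_jg_{n+1-j}-2qg_n+2ig_{n+1}$, bounds all the $g_n$ uniformly via Lemma \ref{L6.1}, and then proves Lemma \ref{L4.2} (bounds $|g_1^{(N)}|\le CM^NN!$) by a nontrivial majorant argument comparing with an explicit Cauchy-Kovalevskaya problem. Something of that strength is needed and your sketch does not supply it. If you want to carry out the direct approach rather than reduce to Dirac, the comparison argument of Lemma \ref{L4.2} would have to be adapted to the coupled hierarchy $N_0'=2iN_1$, etc., and the uniformity in $x$ of the Taylor bounds must be made explicit.
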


Now let's return to the situation from Lemma \ref{L1.1}.
In the simplest case, when $U=I$ is a single interval, $\Omega$ is simply connected and conformally equivalent to $\C^+$, if we also exclude
the (rather trivial) case $I=\R$ here, which would give $\Omega=\C$.
Fixing such a map $\varphi: \C^+\to\Omega$, we may then introduce the new Herglotz function $F(\lambda)=M(\varphi(\lambda))$. Moreover,
convenient explicit maps $\varphi$ are available in these simple cases.

This will set up a one-to-one correspondence between $\mathcal R (U)$ and Herglotz functions $F$ (= Theorem \ref{T3.1} below), and it will allow
us to draw interesting conclusions about individual systems $H\in\mathcal R(U)$.

Since both \cite{HMcBR} and the present work start from exactly the same place, namely Lemma \ref{L1.1}, let us also briefly discuss
how we go beyond \cite{HMcBR} here. First of all, we give a detailed discussion of the Dirac operators from $\mathcal R (U)$.
This case was not discussed in \cite{HMcBR}, which instead dealt with Jacobi and Schr{\"o}dinger operators exclusively. This is curious as arguably the method works
best for Dirac operators. In any event, we fill this gap here. We also use canonical systems as our general framework, thus expanding the scope of the
method, and actually simplifying it in the process. This broad and general viewpoint will already pay off in the next (preparatory) section, when we discuss basic material
on Dirac operators. Much of what we do there is of course also discussed in standard references on the subject such as
\cite{LevSar}, but we do hope that our more conceptual approach, which emphasizes the connection to canonical systems and
abstract notions, may have something to offer. Compare also \cite{BLY}.

In Section 3, we then give the anticipated description of $\mathcal R(U)$ and clarify where the Dirac operators are in this space; see Theorems \ref{T3.1}, \ref{T3.2} and
perhaps also Theorem \ref{T5.1}. The answer to this second question (= Theorem \ref{T3.2}) can be restated in abstract style: the Dirac operators in $\mathcal R(U)$
form a compact convex set in a natural way. See Theorem \ref{T3.3} please. A tantalizing aspect of this reformulation is that it would allow us to recover
the characterization of Theorem \ref{T3.2} of the Dirac equations within the larger space $\mathcal R(U)$ without any recourse to the highly technical machinery
of inverse spectral theory if we had an independent proof of it (but of course we don't). We'll explain this in more detail at the end of Section 3.

We can easily identify the extreme points of this compact convex set, and they have a convenient
spectral theoretic description also. See Theorem \ref{T3.3} again.

Section 4 then analyzes Dirac operators from $\mathcal R(U)$ in some detail, following ideas from \cite{HMcBR,Mar}.
As in the Schr{\"o}dinger case that was analyzed in these works, it turns out that $W(x)$ is real analytic and obeys sharp bounds. See Theorem \ref{T4.1},
and compare also the discussion of Theorem \ref{T1.2} above.

In Section 5, we prove a version of Theorem \ref{T1.2} for Dirac operators.
In the final Section 6, we return to the broader perspective of canonical systems. In particular, we give versions of the results of Section 4 that apply in
this generality, and we can then at last also prove Theorem \ref{T1.2} as stated.

\textit{Acknowledgment. }We thank Max Alekseyev for help with the proof of Lemma \ref{L4.2} in the form of a Math Overflow answer.
\section{Dirac operators and canonical systems}
In this section, we discuss basic material on Dirac equations. Somewhat unusually perhaps, we give center stage to
the associated canonical systems.

Unlike the Dirac equation, such a system \eqref{can} does not have coefficient functions that do not get multiplied by the spectral parameter $z$,
so if we want to rewrite a Dirac equation \eqref{Dirac} as a canonical system, the obvious attempt is variation of constants about $z=0$.
This is also the standard procedure in analogous settings; compare \cite[Sections 1.3, 5.3, 7.1]{Rembook}.

So define the transfer matrix $T(x)$ for \eqref{Dirac} at $z=0$ as the matrix solution of $JT'+WT=0$ with the initial value $T(0)=1$ (the $2\times 2$
identity matrix). Observe that since we can write the equation as $T'=JWT$ and $\tr JW=0$, we have $\det T(x)=1$ for all $x$.

Next, if $y$ is any solution of \eqref{Dirac}, define $u(x)$ by writing $y=Tu$. A quick calculation shows that $u$ solves the canonical system \eqref{can}
with coefficient function
\begin{equation}
\label{2.1}
H(x) = T^t(x)T(x) .
\end{equation}
Note that, as announced above, we now work with canonical systems that are not trace normed; rather, $H$ satisfies $\det H(x)=1$.
We could change to the normalization $\tr H=1$ by a change of variable \cite[Section 1.3]{Rembook},
but it is more convenient to keep the determinant normalized instead.
We then still have uniqueness of coefficients for given spectral data in the sense that if $m_+(z;H_1)=m_+(z;H_2)$ and $\det H_1(x)=\det H_2(x)=1$,
then $H_1(x)=H_2(x)$ for almost every $x\ge 0$. (The determinant normalization has the potential drawback that if $\det H(x)=0$ on a set
of positive measure, then there is no equivalent $H_1$ with $\det H_1=1$, while any $H$ can be trace normed.)

The steps that led to \eqref{2.1} can be taken in reverse, so it is also true that if $u$ solves the canonical system \eqref{can} with coefficient function given by \eqref{2.1}
and we define $y=Tu$, then $y$ solves the Dirac equation \eqref{Dirac}, with $W=-JT'T^{-1}$.
So we have succeeded in rewriting an arbitrary Dirac equation as a canonical system.
Let's summarize and add some precision and more detail to this picture.
\begin{Theorem}
\label{T2.1}
Let $W\in \mathcal W[0,\infty)$. Then there is a canonical system $H(x)$, $x\ge 0$, such that
\begin{equation}
\label{2.3}
m_+(z; W) = m_+(z; H), \quad z\in\C^+ .
\end{equation}
This canonical system is unique if normalized appropriately, which can most naturally be done by requiring that $\det H(x)=1$.
It is then given by \eqref{2.1}, and it satisfies
\begin{equation}
\label{2.2}
H\in AC[0,\infty), \quad H(0)=1, \quad \det H(x)=1 .
\end{equation}

Conversely, if an $H\in\mathcal C[0,\infty)$ satisfying \eqref{2.2} is given, then there is a Dirac potential $W\in \mathcal W [0,\infty)$ such that \eqref{2.3} holds.
\end{Theorem}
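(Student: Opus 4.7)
\medskip

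\noindent\textbf{Proof plan for Theorem \ref{T2.1}.}

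\emph{Forward direction.} Given $W\in\mathcal W[0,\infty)$, the plan is to run the variation-of-constants construction outlined in the paragraphs preceding the theorem and then verify each claim carefully. First I would define $T$ as the matrix solution of $JT'+WT=0$, $T(0)=1$, which, rewritten as $T'=JWT$, has $L^1_{\textrm{loc}}$ coefficients, so $T\in AC_{\textrm{loc}}[0,\infty)$. Since $\tr(JW)\equiv 0$ (direct calculation using $W=W^t$), Liouville's formula gives $\det T(x)\equiv 1$. Setting $H:=T^tT$ then yields \eqref{2.2} immediately: $H\in AC_{\textrm{loc}}$, $H(0)=1$, $\det H=(\det T)^2=1$, and $H\ge 0$ (in fact $>0$).

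The key identity for transferring the spectral data is $T^tJT=J$, which holds for any $T\in\SL(2,\R)$ and which I would verify by a two-line calculation. Given a solution $y$ of \eqref{Dirac}, set $u:=T^{-1}y$; substituting $y=Tu$ into \eqref{Dirac} and using $JT'=-WT$ collapses the middle terms, leaving $JTu'=-zTu$; multiplying by $T^t$ on the left and applying $T^tJT=J$ delivers \eqref{can} with $H=T^tT$. The same map $y\mapsto u$ is an isometry between $L^2$ and $L^2_H$ because $\int y^*y=\int u^*Hu$, and it preserves the value at $x=0$ since $T(0)=1$. Hence the half line square integrable solutions correspond, and \eqref{2.3} follows from the definition \eqref{defm}. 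Uniqueness of $H$ given \eqref{2.3} and $\det H=1$ is the one place where I would explicitly cite the inverse spectral theory of canonical systems (e.g.\ \cite[Ch.~5]{Rembook}); this is the deepest input, and I regard it as the main obstacle, though it is a black box here.

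\emph{Converse direction.} Given $H$ satisfying \eqref{2.2}, I need to recover a $T$ with $\det T=1$, $T(0)=1$, $T^tT=H$, and then put $W:=-JT'T^{-1}$. The most direct choice is the pointwise positive symmetric square root $T(x):=H(x)^{1/2}$. Since $H(x)$ is symmetric and positive definite (as $H\ge 0$ and $\det H=1$), this is well defined; $T^t=T$ gives $T^tT=T^2=H$; $\det T=(\det H)^{1/2}=1$; and $T(0)=1$. Because $H\mapsto H^{1/2}$ is real analytic on the open cone of positive definite matrices, $H\in AC_{\textrm{loc}}$ transfers to $T\in AC_{\textrm{loc}}$, so $T'\in L^1_{\textrm{loc}}$ and, since $T^{-1}$ is continuous, $W\in L^1_{\textrm{loc}}$.

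It remains to check $W=W^t$. This is the only nontrivial point of the converse, and I would dispatch it by a short calculation valid for any $T\in\SL(2,\R)$: writing $T=\begin{pmatrix}a&b\\c&d\end{pmatrix}$ with $ad-bc=1$, a direct expansion shows $W_{12}-W_{21}=(ad)'-(bc)'=(ad-bc)'=0$, so the symmetry of $W$ comes for free from $\det T\equiv 1$, independently of which $\SL(2,\R)$-factorization of $H$ we chose. Finally, running the forward construction on this $W$ reproduces the same $T$ (by uniqueness for the ODE $T'=JWT$, $T(0)=1$) and hence returns the prescribed $H$, so \eqref{2.3} holds by the forward direction just proved.
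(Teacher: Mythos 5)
Your proof is correct, and while the forward direction is essentially the paper's (transform $y=Tu$, use $T^tJT=(\det T)J=J$, note $y^*y=u^*Hu$ and $y(0)=u(0)$, cite uniqueness of normalized canonical systems from \cite{Rembook}), your converse direction takes a genuinely different and arguably cleaner route. The paper factors $H=T^tT$ with an upper-triangular, Cholesky-type $T$ built from a polar-coordinate parametrization $R_1,R_2,\theta$ of the entries of $H$, and then checks symmetry of $W=-JT'T^{-1}$ by computing $W_{21}-W_{12}=(R_1R_2\sin\theta)'=0$ from the constraint $\det H=1$. You instead take the symmetric square root $T=H^{1/2}$ and observe that for \emph{any} $AC$ factor $T$ with $\det T\equiv 1$, a one-line expansion gives $W_{12}-W_{21}=(\det T)'=0$. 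This is a nicer lemma: it decouples the symmetry of $W$ from the choice of factorization and makes the mechanism transparent. The one thing your choice gives up is that the paper's explicit Cholesky factor is reused verbatim in the proof of Proposition \ref{P2.1} to produce a $W$ with $W_{11}\equiv 0$, whereas the symmetric square root does not directly give that normalization; but for Theorem \ref{T2.1} itself this is irrelevant. Two small points you should make explicit if you write this out in full: the $AC$-transfer from $H$ to $H^{1/2}$ needs the (standard but worth stating) fact that a locally Lipschitz map composed with an $AC$ function is $AC$, together with the observation that $H([0,b])$ is a compact subset of the open cone of positive definite matrices; and the final step ``running the forward construction reproduces the same $T$'' uses that $W=-JT'T^{-1}$ is equivalent to $T'=JWT$ via $J^2=-I$, so ODE uniqueness applies.
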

We have focused on half line problems here since this is the most basic case. We will eventually be interested in whole line problems, when we discuss
reflectionless operators, but the standard way to set up a spectral representation of these is to cut them into two half lines and use the half line $m$ functions
$m_{\pm}$ as the key ingredients. See \cite[Section 3.7]{Rembook}. Of course, the analog of Theorem \ref{T2.1} for left half lines also holds.

In the last part of the theorem, $W$ is not uniquely determined by its $m$ function or, equivalently, by $H$. This is well known, and we will discuss the matter
in more detail in Theorem \ref{T2.2} below.

As announced, Theorem \ref{T2.1} lets us think of Dirac operators as special canonical systems, and we will occasionally be a bit cavalier about the distinction
and talk about Dirac potentials $W\in\mathcal W$ as if they were literally canonical systems when we really mean the $H\in\mathcal C$ that is associated with $W$.
\begin{proof}
Everything in the first part is an immediate consequence of the simple transformation $y=Tu$ that we discussed above. To prove \eqref{2.3}, notice that by \eqref{2.1},
we have $y^*y=u^*Hu$, so $u\in L^2_H(0,\infty)$ if and only if $y\in L^2((0,\infty);\C^2)$. So if $z\in\C^+$ and $y\in L^2$ solves \eqref{Dirac}, then
\[
m_+(z;W) = y(0) = T(0)u(0) = u(0) =m_+(z;H) ,
\]
as claimed. (It is probably also helpful in this context to be aware of the fact formulated
as Corollary \ref{C2.1} below.) The uniqueness of a suitably normalized canonical system with prescribed $m$ function is one of the basic (but difficult) results
on these, and it was already mentioned above; compare \cite[Theorem 5.1]{Rembook}.
The properties of $H=T^tT$ stated in \eqref{2.2} are obvious because $T\in AC$, $\det T=1$, $T(0)=1$.

Now suppose that an $H$ satisfying \eqref{2.2} is given. Let's write
\[
H(x) = \begin{pmatrix} R_1(x)^2 & R_1(x)R_2(x)\cos\theta(x) \\ R_1(x)R_2(x)\cos\theta & R_2(x)^2 \end{pmatrix} ,
\]
with $R_j,\theta\in AC$, $R_j(0)=1$, $\theta(0)=\pi/2$, $R_j(x)> 0$. This is possible at every fixed $x$ because any matrix $H(x)> 0$
can be written in this form. That the additional properties of the \textit{functions }$R_j,\theta$ may be insisted on
follows from the initial value $H(0)=1$ and the fact that $H\in AC$.
From the further condition $\det H=1$ we then see that $R_1^2R_2^2\sin^2\theta = 1$ and in fact
\begin{equation}
\label{2.4}
R_1(x)R_2(x)\sin\theta (x) = 1
\end{equation}
since this (and not $-1$) is the value at $x=0$ and the function on the left-hand side is continuous.

Now let
\[
T(x) = \begin{pmatrix} R_1(x) & R_2(x)\cos\theta(x) \\ 0 & R_2(x)\sin\theta(x) \end{pmatrix} .
\]
It is clear that $T(0)=1$, $T\in AC$, $\det T=1$, and, by a quick calculation, $H=T^tT$.

We now define, as we must, $W(x)=-JT'(x)T(x)^{-1}$. This is clearly locally integrable on $0\le x<\infty$, but the symmetry of $W(x)$ is perhaps not obvious.
However, from a calculation we find $W_{21}-W_{12}= (R_1R_2\sin\theta)'$, and this equals $0$, as required, by \eqref{2.4}.
So $W\in\mathcal W$, and by construction we have $JT'+WT=0$, $H=T^tT$. So the already established first part now shows that \eqref{2.3} holds,
as desired.
\end{proof}
\begin{Corollary}
\label{C2.1}
For any $W\in\mathcal W[0,\infty)$, $z\in\C^+$, the Dirac equation \eqref{Dirac} has exactly one linearly independent solution $y\in L^2(0,\infty)$.
\end{Corollary}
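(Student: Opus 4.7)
The strategy is to reduce the claim to the analogous statement for the associated canonical system. By Theorem \ref{T2.1}, the substitution $y = Tu$, where $T$ is the transfer matrix at $z = 0$ (so $T(0) = 1$, $\det T \equiv 1$), gives a linear isomorphism between solutions of the Dirac equation \eqref{Dirac} and solutions of the canonical system \eqref{can} with coefficient $H = T^t T$. As already noted in the proof of Theorem \ref{T2.1}, the identity $y^* y = u^* H u$ implies that $y \in L^2((0,\infty);\C^2)$ if and only if $u \in L^2_H(0,\infty)$; since $T$ is invertible, the correspondence also preserves linear independence. Thus the dimension of the $L^2$ solution space of \eqref{Dirac} equals the dimension of the $L^2_H$ solution space of the canonical system.

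It therefore suffices to show that the canonical system with coefficient $H = T^t T$ has exactly one linearly independent $L^2_H$ solution for $z \in \C^+$, i.e., is in the limit point case at infinity. General canonical systems theory produces at least one such solution; the content of the corollary is the uniqueness, for which I would use that the determinant normalization $\det H \equiv 1$ is incompatible with the degenerate tail behaviour characterizing the limit circle alternative. Quantitatively, the two positive eigenvalues $\lambda_1(x), \lambda_2(x)$ of $H(x)$ satisfy $\lambda_1\lambda_2 = 1$, so AM--GM gives $\tr H(x) \geq 2$ and hence $\int_0^\infty \tr H(x)\, dx = \infty$. Reparametrizing in arc-length $t = \int_0^x \tr H$ produces a trace-normalized system on the full half line $[0,\infty)$ whose coefficient is strictly positive definite everywhere, so in particular is never a rank-one projection $P_\alpha$. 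The standard limit point criterion for such systems, as developed in \cite[Chapter 3]{Rembook}, then delivers limit point and hence uniqueness.

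The only real obstacle is invoking a precise limit point criterion for canonical systems; once that is granted, the corollary is a transparent consequence of Theorem \ref{T2.1} combined with the determinant normalization.
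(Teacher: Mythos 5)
Your proposal is correct and follows essentially the same route as the paper: reduce to the associated canonical system via $y=Tu$ and $y^*y=u^*Hu$, observe that $\det H\equiv 1$ forces $\tr H(x)\ge 2$ by AM--GM, and conclude limit point from the criterion that this holds iff $\tr H\notin L^1$ (\cite[Theorem 3.5]{Rembook}). The detour through reparametrizing to a trace-normed system and the remark that the resulting coefficient is never $P_\alpha$ are unnecessary — the criterion $\tr H\notin L^1$ applies directly to the determinant-normalized $H$, and the $P_\alpha$ observation plays no role in the limit point/limit circle dichotomy.
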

This is usually stated as saying that a Dirac equation is in the limit point case at $\infty$. Given the connection to canonical systems, the proof
is delightfully simple now, especially when compared with the traditional treatment \cite[Chapter 8]{LevSar}.
\begin{proof}
We already observed at the beginning of the proof of Theorem \ref{T2.1}
that the square integrable solutions of a Dirac equation are obtained exactly from the $L^2_H$ solutions of the associated
canonical system, so the statement is equivalent to the corresponding claim for the canonical system. A canonical system is in the limit point case
if and only if $\tr H\notin L^1$ \cite{Arch}, \cite[Theorem 3.5]{Rembook}. Since $\det H=1$ and $H\ge 0$, we have $\tr H\ge 2$.
\end{proof}
We also make a definition that summarizes the message of Theorem \ref{T2.1}, for whole line operators.
\begin{Definition}
\label{D2.1}
We define the collection of \textit{Dirac canonical systems }as
\[
\mathcal D = \{ H\in\mathcal C (\R) : H\in AC(\R), H(0)=1, \det H(x)=1 \} .
\]
\end{Definition}
Again, we have abandoned the original normalization $\tr H=1$, but that was only convenient, not necessary.
An equivalent version of the definition declares an $H\in\mathcal C$, $\tr H=1$, to be in $\mathcal D$ if and only if $H\in AC$, $H(0)=1/2$,
$\det H(x)>0$.

We now discuss transformations of $W\in\mathcal W$ that preserve the $m$ function. These are obtained by letting the following group of functions act
on $\mathcal W[0,\infty)$:
\[
\mathcal A_0=\{ \alpha\in AC[0,\infty): \alpha(0)=0\} .
\]
The subscript zero reminds us of the requirement $\alpha (0)=0$. We will drop this condition later, when we briefly discuss the action of a more general
(non-abelian) group at the end of this section.

We use the notation
\[
R_{\beta} = \begin{pmatrix} \cos\beta & -\sin\beta \\ \sin\beta & \cos\beta \end{pmatrix}
\]
to refer to a general rotation matrix $R\in\operatorname{SO}(2)$.
For $W\in\mathcal W$ and $\alpha\in\mathcal A_0$, define
\begin{equation}
\label{2.6}
(\alpha\cdot W)(x) = R_{\alpha(x)}W(x)R^t_{\alpha(x)} + \alpha'(x) .
\end{equation}
It is clear that $\alpha\cdot W\in\mathcal W$ again.
The notation emphasizes the fact that this is a group action of $\mathcal A_0$ on $\mathcal W$, if we use pointwise addition as the group operation
on $\mathcal A_0$.
We will use this dot notation for group actions consistently in the sequel also.
\begin{Theorem}
\label{T2.2}
Let $W_1$, $W_2\in\mathcal W[0,\infty)$.
We have
\[
m_+(z; W_1) = m_+(z;W_2) \quad (z\in\C^+)
\]
if and only if $W_2=\alpha\cdot W_1$ for some $\alpha\in\mathcal A_0$.
In this case, $\alpha\in\mathcal A_0$ is unique, and $T_2(x)=R_{\alpha(x)}T_1(x)$.
\end{Theorem}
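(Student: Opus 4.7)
For the $(\Leftarrow)$ direction I would compute directly. Set $\widetilde T:= R_\alpha T_1$: then $\widetilde T(0) = I$ since $\alpha(0) = 0$, and differentiating, using $R_\alpha' = \alpha' JR_\alpha$, the commutation $JR_\alpha = R_\alpha J$, and the equation $T_1' = JW_1 T_1$, a short calculation yields
\[
J\widetilde T' + (\alpha' I + R_\alpha W_1 R_\alpha^t)\widetilde T = 0 .
\]
By uniqueness of the transfer matrix this forces $T_2 = \widetilde T = R_\alpha T_1$, and then $H_2 = T_2^t T_2 = T_1^t R_\alpha^t R_\alpha T_1 = H_1$, so the $m$ functions agree by Theorem~\ref{T2.1}.

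For the nontrivial direction $(\Rightarrow)$, the plan is to first recover $H_1 = H_2$ from the equality of $m$ functions via the uniqueness clause of Theorem~\ref{T2.1}, which gives $T_1^t T_1 = T_2^t T_2$. Setting $R(x) := T_2(x) T_1(x)^{-1}$, we have $R^t R = I$ and $\det R = 1$, so $R(x) \in \operatorname{SO}(2)$ at every $x$, and $R(x) = R_{\alpha(x)}$ for a pointwise unique angle $\alpha(x)$ modulo $2\pi$.

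The main technical step will be to promote this pointwise angle to a function in $\mathcal A_0$. Since $\det T_1 = 1$ and $T_1 \in AC$, the inverse $T_1^{-1}$ is AC, hence so is $R = T_2 T_1^{-1}$; the entries $\cos\alpha(\cdot), \sin\alpha(\cdot)$ are AC real-valued functions. Using $R(0) = I$ to fix the branch, I would take the continuous lift with $\alpha(0) = 0$. Absolute continuity of this lift holds because the universal cover $\R \to \operatorname{SO}(2)$ is a local diffeomorphism; equivalently, differentiating $\cos^2\alpha + \sin^2\alpha \equiv 1$ gives
\[
\alpha'(x) = \cos\alpha(x)\, (\sin\alpha)'(x) - \sin\alpha(x)\, (\cos\alpha)'(x) \in L^1_{\textrm{loc}},
\]
so $\alpha(x) = \int_0^x \alpha'(s)\, ds$ is AC. With $\alpha \in \mathcal A_0$ in hand, running the calculation of the first paragraph in reverse from $T_2 = R_\alpha T_1$ produces $W_2 = R_\alpha W_1 R_\alpha^t + \alpha' I = \alpha \cdot W_1$.

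Uniqueness of $\alpha$ is then immediate: two candidates $\alpha_1, \alpha_2 \in \mathcal A_0$ both yielding $W_2$ satisfy $R_{\alpha_i} T_1 = T_2$ by the first paragraph, forcing $R_{\alpha_1} \equiv R_{\alpha_2}$; continuity and the common initial value $0$ then give $\alpha_1 \equiv \alpha_2$. The only genuine obstacle I anticipate is the AC lifting step, which is standard; everything else is direct computation or an invocation of Theorem~\ref{T2.1}.
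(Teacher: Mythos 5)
Your proof is correct and follows essentially the same route as the paper: both directions reduce to the identity $T_2 = R_\alpha T_1$ via the uniqueness half of Theorem~\ref{T2.1}, and then the potentials are compared by direct differentiation. The only cosmetic differences are that you obtain the rotation as $R = T_2 T_1^{-1}$ directly rather than via polar decompositions (equivalent observations), and you argue uniqueness of $\alpha$ by continuity of the angle lift whereas the paper uses the group-action reduction $\alpha\cdot W = W \Rightarrow \alpha' = 0$ by taking traces.
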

\begin{proof}
We begin with the uniqueness claim about $\alpha$. Since we have a group action, it suffices to check that $\alpha\cdot W=W$ only if $\alpha=0$.
This follows at once by taking the trace of both sides of $R_{\alpha}WR^t_{\alpha}+\alpha' = W$ to conclude that $\alpha'=0$.

Now assume that $W_2=\alpha\cdot W_1$. By Theorem \ref{T2.1}, we can establish the claim by showing that the associated canonical systems
satisfy $H_1=H_2$. This will be clear as soon as we have the asserted identity $T_2=R_{\alpha}T_1$, and this is obvious since an easy calculation
shows that if $JT'_1+W_1T_1=0$, then $J(R_{\alpha}T_1)'+(\alpha\cdot W_1)R_{\alpha}T_1=0$. So since also $(R_{\alpha}T_1)(0)=1$,
this matrix function solves the same initial value problem as $T_2$ and thus agrees with this transfer matrix.

Conversely, suppose now that the $m$ functions of $W_1$, $W_2$ agree. By Theorem \ref{T2.1} and the uniqueness of suitably normalized canonical systems
with given $m$ function, this means that $H_1=H_2$. In other words,
the transfer matrices of the Dirac systems satisfy
\begin{equation}
\label{2.7}
T^t_2(x)T_2(x)=T^t_1(x)T_1(x) .
\end{equation}
Now consider the polar decompositions $T = U|T|$ for $T=T_1(x),T_2(x)$.
Since $T$ has real entries, so does $|T| = (T^*T)^{1/2}=(T^tT)^{1/2}$. Moreover, $\det |T|=|\det T|=1$, so $U$ is also real and $\det U=1$.
In other words, $U\in SO(2)$, that is, $U=R_{\beta}$ for some $\beta$. Finally, \eqref{2.7} shows that $|T_1(x)|=|T_2(x)|$.

These observations imply that
$T_2(x) = R_{\alpha(x)} T_1(x)$, for some function $\alpha(x)$.
Since $T_2,T^{-1}_1\in AC$, we may also assume that $\alpha\in AC$ here. Furthermore, since $T_2(0)=T_1(0) (=1)$,
we can take $\alpha (0)=0$. So $\alpha\in\mathcal A_0$. Now the calculation that we already did above (though not explicitly) will confirm
that $T_2$ solves $JT'_2+(\alpha\cdot W_1)T_2=0$, so $W_2=-JT'_2T^{-1}_2=\alpha\cdot W_1$, as claimed.
\end{proof}
A sensible way to take advantage of Theorem \ref{T2.1} would be to restrict one's attention to potentials $W$ satisfying $\tr W(x)=0$.
These potentials realize exactly all the $m$ functions one can get from Dirac systems from $\mathcal D$. The following different normalization
will be convenient for us later.
\begin{Proposition}
\label{P2.1}
For any $W\in\mathcal W (\mathbb R)$, there is an $\alpha\in\mathcal A_0 (\R)$ such that $(\alpha\cdot W)_{11}\equiv 0$.
\end{Proposition}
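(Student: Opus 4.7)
The plan is to reduce the requirement $(\alpha\cdot W)_{11}\equiv 0$ to a scalar first-order ODE for $\alpha$ and then solve that ODE globally on $\mathbb{R}$ via Caratheodory existence.

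First I would unpack \eqref{2.6}. A short computation (the one tacitly used in the proof of Theorem~\ref{T2.2}, exploiting that $J$ commutes with every $R_{\alpha}$ together with $-J^2=I$) shows that the ``$+\alpha'(x)$'' in \eqref{2.6} really means $\alpha'(x)\,I$, so
\[
(\alpha\cdot W)(x) = R_{\alpha(x)}\, W(x)\, R^t_{\alpha(x)} + \alpha'(x)\,I .
\]
Writing
\[
W(x)=\begin{pmatrix} a(x) & b(x) \\ b(x) & c(x) \end{pmatrix} ,
\]
a direct calculation gives $(R_{\alpha}WR^t_{\alpha})_{11} = a\cos^2\alpha - b\sin(2\alpha) + c\sin^2\alpha$, so the condition $(\alpha\cdot W)_{11}\equiv 0$ is equivalent to the scalar initial value problem
\[
\alpha'(x) = -a(x)\cos^2\alpha(x) + b(x)\sin(2\alpha(x)) - c(x)\sin^2\alpha(x), \qquad \alpha(0)=0 .
\]

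Next I would solve this on all of $\mathbb{R}$. The right-hand side $F(x,\alpha)$ is continuous in $\alpha$ for each $x$, measurable in $x$ for each $\alpha$, and obeys the uniform-in-$\alpha$ bound
\[
|F(x,\alpha)|\le |a(x)|+|b(x)|+|c(x)| ,
\]
which lies in $L^1_{\mathrm{loc}}(\mathbb{R})$ because $W\in\mathcal{W}(\mathbb{R})$. Caratheodory's theorem then produces an absolutely continuous local solution through $\alpha(0)=0$. The same integrable majorant forces $|\alpha(x)|\le \int_0^{x}(|a|+|b|+|c|)\,ds$ on every bounded interval in either direction, so the solution cannot blow up in finite $x$ and extends to an $\alpha\in AC(\mathbb{R})$ with $\alpha(0)=0$, i.e.\ to an element of $\mathcal{A}_0(\mathbb{R})$. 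By construction, $(\alpha\cdot W)_{11}\equiv 0$.

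There is no serious obstacle here: the proposition amounts to global existence for a scalar ODE with locally $L^1$ coefficients, which is standard. The one point worth flagging is that it is precisely the commutativity $JR_{\alpha}=R_{\alpha}J$ inside $SO(2)$ that turns the extra term in \eqref{2.6} into a scalar matrix; had that term been a nontrivial $\alpha$-dependent matrix, the equation $(\alpha\cdot W)_{11}=0$ might have imposed an algebraic constraint on $\alpha$ rather than the solvable first-order differential equation above.
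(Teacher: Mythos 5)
Your proof is correct and takes essentially the same approach as the paper's "from scratch" argument: compute $(R_\alpha W R_\alpha^t)_{11}$, rewrite the constraint $(\alpha\cdot W)_{11}\equiv 0$ as a scalar first-order ODE for $\alpha$ with $\alpha(0)=0$, and invoke the uniform-in-$\alpha$ integrable bound on the right-hand side to get a global Carath\'eodory solution. The ODE you derive matches the paper's (after identifying $a=W_{11}$, $b=W_{12}$, $c=W_{22}$), and your treatment of global existence is somewhat more explicit than the one-line justification given in the paper. The only thing you omit is the paper's first observation, that the desired potential was already produced in the proof of Theorem~\ref{T2.2} (via the explicit upper-triangular $T$), but since you correctly carry out the second argument to completion that is immaterial.
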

\begin{proof}
We in fact did this already: the potential $W$ constructed in the last part of the proof of Theorem \ref{T2.1} has the desired property.

But we can also easily do it from scratch:
By calculating the matrix element in question, we see that we are trying to find an $\alpha$ such that
\[
\alpha' + W_{11}\cos^2\alpha + W_{22}\sin^2\alpha - 2W_{12}\sin\alpha\cos\alpha = 0, \quad \alpha(0)=0 .
\]
This initial value problem for the ODE $\alpha' = f(x,\alpha)$ has a global solution since $f$ satisfies the uniform (in $\alpha$)
bound $|f(x,\alpha)|\lesssim \|W(x)\|$.
\end{proof}
The off-diagonal elements of $W$ can not always be eliminated in this fashion since we would now obtain $\alpha$ from
an algebraic equation, and this function is not guaranteed to be absolutely continuous, not even if $W$ is smooth.

We now make a few quick remarks on general boundary conditions of \eqref{Dirac}. On a half line, this equation generates a one-parameter
family of self-adjoint operators on $L^2((0,\infty);\C^2)$, corresponding to the different boundary conditions
\begin{equation}
\label{bc}
y_1(0)\sin\theta + y_2(0)\cos\theta = 0
\end{equation}
that can be imposed on elements of the domain. Our definition of the half line $m$ function as $m_+(z)=f_+(0,z)$ means that specifically
the boundary condition $y_2(0)=0$ was (tacitly) chosen. One can now wonder what additional spectral data might have been realized by other boundary conditions,
and the answer is none at all. The situation is exactly the same as for canonical systems: one boundary condition is enough because a change of boundary condition
can be simulated by transforming $W$ (or $H$) instead and keeping the boundary condition the same. Please see \cite[Section 3.6]{Rembook} for a detailed
treatment of this issue for canonical systems.

Let's now briefly look at the details for Dirac operators. First of all, a piece of notation: we write
\begin{equation}
\label{2.57}
\begin{pmatrix} a & b \\ c&d \end{pmatrix} \cdot z = \frac{az+b}{cz+d}
\end{equation}
for the action of an invertible matrix on $\C_{\infty}$ as a linear fractional transformation. Recall also that the automorphisms
of $\C^+$ are obtained from the matrices $A\in\SL (2,\R)$.

If we now change boundary conditions to \eqref{bc},
then the new $m$ function $m_{\theta}$ is related to the old one by \cite[Section 8.2]{LevSar}
\[
m_{\theta}(z) = \frac{m(z)\cos\theta - \sin\theta}{m(z)\sin\theta+\cos\theta}= R_{\theta}\cdot m(z)  .
\]
Acting in this way by a linear fractional transformation on the $m$ function will transform the coefficient function of a canonical system
as follows \cite[Theorem 3.20]{Rembook}:
\[
H_{\theta}(x) = R^{-1t}_{\theta} H(x) R^{-1}_{\theta} = R_{\theta} H(x) R^t_{\theta}
\]
In particular, in the situation above, when we apply this transformation to an $H\in\mathcal D$, we see that $H_{\theta}\in\mathcal D$ also,
so indeed all Dirac $m$ functions are already obtained from one boundary condition only. We can be more explicit: if we similarly introduce
$T_{\theta}(x) = R_{\theta}T(x)R^t_{\theta}$, with $T$ denoting the transfer matrix of $W$, then this solves
$JT'_{\theta}+R_{\theta}WR^t_{\theta}T_{\theta}=0$, and obviously $T_{\theta}(0)=1$. Since $T^t_{\theta}T_{\theta}=H_{\theta}$,
this shows that one way of realizing the $m$ function $m_{\theta}$ for boundary condition \eqref{bc} is to change the potential to
\begin{equation}
\label{2.16}
W_{\theta}(x)=R_{\theta}W(x)R^t_{\theta}
\end{equation}
instead and leave the boundary condition $y_2(0)=0$ unchanged. Of course, at the level of the
Dirac potentials, there are other options: by Theorem \ref{T2.2}, exactly the potentials $\alpha\cdot W_{\theta}$ work.

We can now develop a more conceptual view of what exactly transformations like the one from Proposition \ref{P2.1} are achieving. We combine
three ingredients: the actions of $\mathcal A_0$ and $\operatorname{PSO}(2)=\operatorname{SO}(2)/\{ \pm 1\}$
on $\mathcal W$, with the latter given by \eqref{2.16}, and, thirdly,
the action of $\mathbb R$ by shifts $(t\cdot W)(x)=W(t+x)$.

We can very conveniently incorporate \eqref{2.16} into the first group action by simply dropping the requirement that $\alpha(0)=0$. To avoid spurious
stabilizers of the group action \eqref{2.6}, we now interpret group elements as functions taking values in the unit circle $S=\{|z|=1\}$. Put differently,
we identify $\alpha,\beta$ if $\beta(x)-\alpha(x) \equiv n\pi$. (This was unnecessary initially since the condition $\alpha(0)=0$, which we have now dropped, singled out
a unique representative.) So we define
\[
\mathcal A = \{ f(x)=e^{2i\alpha(x)}:\R\to S : \alpha\in AC(\mathbb R) \} ,
\]
and of course the group operation is now pointwise multiplication of functions $f,g\in\mathcal A$.
Having made this definition, we will find it more convenient, though, to continue to work with the angle functions $\alpha$, and we will be quite
cavalier about the distinction between $\alpha$ and $f=e^{2i\alpha}$.

The larger group $\mathcal A$ still acts on $\mathcal W$ by \eqref{2.6}.
Of course, this action no longer preserves $m$ functions; rather, since we can write $\alpha=\beta+\alpha(0)$,
with $\beta\in\mathcal A_0$, we have
\begin{equation}
\label{2.17}
\pm m_{\pm}(z; \alpha\cdot W)=R_{\alpha(0)}\cdot (\pm m_{\pm}(z;W)) .
\end{equation}
This is the same as changing boundary conditions by $\theta=\alpha(0)$. The new \textit{whole line }operator $\alpha\cdot W$ is still unitarily equivalent to $W$,
and the property of being reflectionless on a set is preserved. See \cite[Theorems 7.2, 7.9(a)]{Rembook}; in fact, these results are considerably more general,
and our more specific claims
are immediately plausible without any theory since for the whole line problem, the boundary condition at $x=0$ has no intrinsic significance. Rather, it is an artifact
that tells us how exactly we are going to construct a spectral
representation of the whole line operator via $m_{\pm}$, which do depend on the boundary condition. We also mention in passing
that the singular spectra of the \textit{half line }problems can change
dramatically. These phenomena are well known and they have been studied extensively; see, for example, \cite{Simr1}.

We also see from \eqref{2.17} that even if $\alpha\in\mathcal A_0$, then the transformation from $W$ to $\alpha\cdot W$ will have an effect on
the $m$ functions of the \textit{shifted }problems. More precisely, we have
\[
\pm m_{\pm}(z; (\alpha\cdot W)(t+x)) = R_{\alpha(t)}\cdot (\pm m_{\pm}(z; W(t+x))) .
\]
So while the transformation by an $\alpha\in\mathcal A_0$ does not do anything to the original $m$ functions,
it does allow us to vary the $m$ function along the orbit of shifts.

As announced, we can capture all this in an extended abstract framework.
We add shifts to our group actions and thus build a corresponding larger group as the semidirect product $\mathcal A\rtimes\R$, with $\R$ acting
on $\mathcal A$ by shifts also: $(t\cdot\alpha)(x)=\alpha(t+x)$. Obviously, these maps are automorphisms of $\mathcal A$, so the semidirect product
is well defined. Its elements are $(\alpha, t)$, and the group operation is given by
\[
(\alpha, s)(\beta, t) = (\alpha+ s\cdot\beta, s+t) .
\]
\begin{Theorem}
\label{T2.3}
The following combination of the actions \eqref{2.6}, \eqref{2.16}, and shifts defines an action of the (non-abelian) group $\mathcal A\rtimes\R$ on $\mathcal W$:
\[
((\alpha,t)\cdot W)(x) = R_{\alpha(x)}W(t+x)R^t_{\alpha(x)} + \alpha'(x)
\]
The $m$ functions obey
\begin{equation}
\label{2.19}
\pm m_{\pm}(z; (t\cdot(\alpha\cdot W)) = R_{\alpha(t)}\cdot (\pm m_{\pm}(z; t\cdot W)) .
\end{equation}
\end{Theorem}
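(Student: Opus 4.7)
The plan is to verify three things in turn: (a) the formula defines a left action of $\mathcal{A}\rtimes\R$; (b) the output lies again in $\mathcal{W}$; (c) the $m$-function identity \eqref{2.19}. Item (b) is immediate: conjugation by $R_{\alpha(x)}$ preserves symmetry, $\alpha'(x)I$ is symmetric, and local integrability of $W$ and of $\alpha'$ passes to the sum. The identity element $(0,0)$ visibly acts trivially.

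For (a), I would compare the two sides of
\[
\bigl((\alpha,s)(\beta,t)\bigr)\cdot W = (\alpha,s)\cdot\bigl((\beta,t)\cdot W\bigr)
\]
at a general point $x$. Using the group law $(\alpha,s)(\beta,t)=(\alpha+s\cdot\beta,s+t)$, the convention $(s\cdot\beta)(x)=\beta(s+x)$, and the elementary identity $R_{\gamma+\delta}=R_\gamma R_\delta$, the left side reduces to
\[
R_{\alpha(x)+\beta(s+x)}\,W(s+t+x)\,R^t_{\alpha(x)+\beta(s+x)} + \alpha'(x) + \beta'(s+x).
\]
Unfolding the right side in two nested applications of the formula produces the same expression, the only point to notice being that $R_{\alpha(x)}(\beta'(s+x)I)R^t_{\alpha(x)}=\beta'(s+x)I$, because the rotation factors through the scalar matrix.

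For (c), the crucial move is a telescoping decomposition of $\alpha$ at the shift point. Setting $\tilde\alpha(x)=\alpha(t+x)-\alpha(t)$, one has $\tilde\alpha\in\mathcal{A}_0$ and $\tilde\alpha'(x)=\alpha'(t+x)$, so $R_{\alpha(t+x)}=R_{\alpha(t)}R_{\tilde\alpha(x)}$. A short calculation then gives
\[
\bigl(t\cdot(\alpha\cdot W)\bigr)(x) = R_{\alpha(t)}\,\bigl(\tilde\alpha\cdot(t\cdot W)\bigr)(x)\,R^t_{\alpha(t)},
\]
the scalar piece $\tilde\alpha'(x)I$ commuting with $R_{\alpha(t)}$. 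Theorem \ref{T2.2}, applied (together with its left half-line analog) to $\tilde\alpha\in\mathcal{A}_0$, gives $\pm m_\pm(z;\tilde\alpha\cdot(t\cdot W))=\pm m_\pm(z;t\cdot W)$. Conjugation of the potential by $R_{\alpha(t)}$ is precisely the transformation \eqref{2.16}, which by the discussion preceding that equation acts on $\pm m_\pm$ through the linear fractional transformation $R_{\alpha(t)}\cdot$. Combining these two facts yields \eqref{2.19}; the $\pm$ signs on both sides absorb the sign convention in \eqref{defm} and let one calculation handle $m_+$ and $m_-$ simultaneously.

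No step here is genuinely hard: the theorem is a bookkeeping exercise built on Theorem \ref{T2.2} and the boundary-condition calculation around \eqref{2.16}. The only place where a small idea is required is the telescoping in (c), which rewrites $\alpha$ at the shifted argument as the constant $\alpha(t)$ plus an element of $\mathcal{A}_0$, cleanly separating the $m$-function-preserving part of the action from the part that implements a change of boundary condition.
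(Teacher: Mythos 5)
Your proof is correct and follows exactly the route the paper has in mind (the paper leaves the verification to the reader, but the telescoping decomposition you use is precisely the one the paper employs to derive \eqref{2.17} and the displayed $m$-function identity for shifted problems just before the theorem statement). The one small idea — writing $\alpha(t+x)=\alpha(t)+\tilde\alpha(x)$ with $\tilde\alpha\in\mathcal A_0$ to separate the boundary-condition rotation from the $m$-preserving part — is indeed the crux, and you have identified and executed it correctly.
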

The \textit{proof }consists of a direct verification of these claims, which we leave to the reader.
Our notation in \eqref{2.19} is a bit sloppy, but more intuitive and easier on the eye
than the more precise notation $(0,t)$, $(\alpha,0)$ for the group elements that we have written simply as $t$ and $\alpha$, respectively.

The theme of \eqref{2.19} could be much expanded: the rotations $R_{\alpha(t)}$ can be combined with the transfer matrix $T(t,z;W)$ of
\eqref{Dirac} (not the one we used above, but the more general version with $z\in\C$ restored)
to provide a cocycle for the action of $\mathcal A\rtimes\R$ that updates the $m$ functions along orbits.
See \cite{RemToda} for much more on this topic in a slightly different setting.
\section{The space $\mathcal R(U)$}
In this section, we study $\mathcal R(U)$, for $U=(-\infty,c)\cup (d,\infty)$, with $d-c>0$. We will be especially interested
in the Dirac operators in this set.

We will explicitly discuss
only the case $c=-1,d=1$. The general case can easily be reduced to this situation by using the transformed potentials
(in the Dirac case) $W_a(x)=W(x)+a$, $a\in\R$, and
$W_g(x) = gW(gx)$, $g>0$, which implement shifts $z\mapsto z+a$ and rescalings $z\mapsto gz$, respectively, of the spectral parameter $z$.

We now follow our outline from the introduction. Let $H\in\mathcal R(U)$, with
\[
U=(-\infty,-1)\cup (1,\infty) .
\]
We then consider the holomorphic function $M:\Omega\to\overline{\C^+}$ from Lemma \ref{L1.1}. By Lemma \ref{L1.2}, or rather its proof,
we can give $M$ the domain
$\Omega=\C_{\infty}\setminus [-1,1]$. Note that, unlike the original domain $\Omega\setminus\{\infty \}$, this domain $\Omega$ is simply connected,
so is conformally equivalent to $\C^+$.
A convenient explicit conformal map is given by
\begin{equation}
\label{phi}
\varphi: \C^+\to\Omega , \quad \varphi(\lambda) =\frac{2\lambda}{\lambda^2+1} .
\end{equation}
This maps the semicircle $\{z\in\C^+: |z|=1\}$ to $U\cup\{\infty \}$, the semidisk $\{z\in\C^+: |z|<1\}$ goes to $\C^+$, and its exterior is mapped to $\C^-$.

Finally, we introduce the key object, namely the new Herglotz function
\[
F: \C^+\to\overline{\C^+}, \quad F(\lambda) = M(\varphi(\lambda)) ;
\]
we call this the $F$ \textit{function }of $H\in\mathcal R (U)$. We will denote the collection of all Herglotz functions by $\mathcal F$.
So the elements $G\in\mathcal F$ are exactly the holomorphic functions $G:\C^+\to\overline{\C^+}$.
\begin{Theorem}
\label{T3.1}
This map $\mathcal R (U)\to\mathcal F$, $H\mapsto F(\lambda ;H)$, that sends an $H\in\mathcal R(U)$ to its $F$ function, is a homeomorphism.
\end{Theorem}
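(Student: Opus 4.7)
The plan is to verify the four components of a homeomorphism (well-definedness, injectivity, surjectivity, and bicontinuity) by leveraging the structural setup already in place: Lemma \ref{L1.1} supplies the continuation $M$ on $\Omega\setminus\{\infty\}$, the proof of Lemma \ref{L1.2} plugs the puncture to give $M:\Omega\to\ov{\C^+}$ with $\Omega=\C_\infty\setminus[-1,1]$, and $\varphi$ is a concrete conformal equivalence $\C^+\to\Omega$. Well-definedness is then immediate: $F=M\circ\varphi$ is holomorphic from $\C^+$ to $\ov{\C^+}$, so $F\in\mathcal F$.

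For injectivity, I would recover the data in the reverse order. Given $F$, the function $M=F\circ\varphi^{-1}$ is determined on $\Omega$, and hence $m_+(z)=M(z)$ for $z\in\C^+$ and $m_-(z)=-\ov{M(\ov z)}$ are determined. The inverse spectral theorem for canonical systems (the uniqueness statement of [Rembook, Theorem 5.1], already invoked in the proof of Theorem \ref{T2.1}) then pins down the two half line systems $H_\pm$, and concatenation yields the whole line $H\in\mathcal R(U)$. For surjectivity, I run the same steps forward: starting from an arbitrary $F\in\mathcal F$, set $M=F\circ\varphi^{-1}$ and define $m_+=M|_{\C^+}$, $m_-(z)=-\ov{M(\ov z)}$. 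Both are generalized Herglotz functions, so the existence part of [Rembook, Theorem 5.1] produces half line systems realizing them. The only nontrivial task is to check that the resulting whole line $H$ lies in $\mathcal R(U)$: for $t\in U$, $M$ is holomorphic at $t$, hence the nontangential boundary values satisfy $m_+(t)=M(t)$ and $m_-(t)=-\ov{M(t)}$ pointwise on $U$, which is \eqref{refless}. The degenerate case $H\in\mathcal Z$ corresponds to $F$ a real constant and is handled separately but transparently, since then both $m_\pm$ are real constants of the required form.

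For bicontinuity, I would equip $\mathcal F$ with uniform convergence on compact subsets of $\C^+$ and $\mathcal R(U)$ with the natural topology inherited from $\mathcal C(\R)$, namely uniform convergence of both half line $m$ functions on compacta of $\C^+$; the latter is the standard topology used throughout the paper and is known to make $\mathcal C(\R)$ compact and metrizable. The conformal map $\varphi$ then converts convergence of $F_n$ on compacta of $\C^+$ into convergence of $M_n$ on compacta of $\Omega$, which in particular gives convergence of $m_{+,n}$ on compacta of $\C^+$ and, via reflection, of $m_{-,n}$ as well. Conversely, if $m_{\pm,n}$ converge on compacta of $\C^+$, then the corresponding $M_n$ form a normal family on $\Omega$ (uniform bounds on compacta of $\ov{\C^+}$-valued holomorphic functions), and uniqueness of the limit on $\C^+$ forces convergence throughout $\Omega$, whence $F_n\to F$.

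The main obstacle I anticipate is the reflectionless condition in the surjectivity step: \eqref{refless} is only an almost everywhere condition, so a priori one must worry whether the system constructed from $m_\pm$ might be reflectionless only on a full-measure subset of $U$ rather than on all of $U$. However, holomorphicity of $M$ across $U$ forces the boundary values $m_\pm(t)$ to exist and satisfy \eqref{refless} at every $t\in U$, so this is really the converse direction of the issue already handled in Lemma \ref{L1.1}. Everything else reduces to standard normal-family arguments and the already cited uniqueness of canonical systems with prescribed $m$ function.
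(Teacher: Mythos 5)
Your proposal is correct and matches the paper's argument essentially step for step: the bijection via $m_\pm\mapsto F\mapsto m_\pm$, the verification that the constructed $H$ is reflectionless on all of $U$ using holomorphicity of $M$ across $U$, and bicontinuity via locally uniform convergence of Herglotz functions (the paper phrases the normal-family step as the equivalence of locally uniform convergence with convergence on one compact set having an accumulation point). The one small mislabeling in your final paragraph is harmless: since $\mathcal R(U)$ is \emph{defined} by the a.e.\ condition \eqref{refless}, reflectionlessness on a full-measure subset of $U$ would already suffice for membership in $\mathcal R(U)$; the stronger everywhere-on-$U$ statement that your argument actually delivers is needed only to recover the holomorphic $M$ and hence $F(\lambda;H)=G(\lambda)$, which is how the paper frames it too.
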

For our purposes here, the important fact is that we have a bijection and thus a parametrization of the $H\in\mathcal R(U)$.
However, the additional claim about the continuity of the map and its inverse is
easy to establish, and it may be of interest in other investigations, so we have stated it too. See also \cite{MykSush2}.

We must explain what topologies we are using.
In fact, both spaces come equipped with natural metrics. On $\mathcal R(U)\subseteq \mathcal C(\R)$ we use the metric on general canonical systems
that is discussed in detail in \cite[Section 5.2]{Rembook}. More importantly for us here, the correspondence $H\leftrightarrow (m_-,m_+)$
between a canonical system and its pair of $m$ functions becomes a homeomorphism between $\mathcal C(\R)$ and $\mathcal F^2$,
provided we give $\mathcal F$ the topology we are about to discuss next. More accurately, we will use the following metric on $\mathcal F$:
\[
d(G,H)=\max_{|z-2i|\le 1} \delta (G(z), H(z)) .
\]
Here, $\delta$ denotes the spherical metric, and $\overline{\C^+}\subseteq\C_{\infty}$ is thought of as a subset of the Riemann sphere
$\C_{\infty}\cong S^2$. Though this may not be immediately apparent from the appearance of $d$, convergence in this metric is the same
as locally uniform convergence of the Herglotz functions. The space
$(\mathcal F,d)$ is compact. For more details on these facts, please consult again \cite[Section 5.2]{Rembook}.

Finally, since \textit{reflectionless }canonical systems are determined by just one half line \cite[Theorem 7.9(b)]{Rembook} and the reconstruction
of the other half line defines a continuous map, we can also measure the distance between $H_1,H_2\in\mathcal R(U)$ by just
$d(m^{(1)}_+,m^{(2)}_+)$, ignoring the left half lines, and we still obtain the same topology.
\begin{proof}
Since we can extract the $m$ functions $m_{\pm}$ from the $F$ function, the map $H\mapsto F$ is clearly injective. It is also surjective because
we can obtain an arbitrary $G\in\mathcal F$ as the $F$ function of an $H$ by simply reversing the steps that led from $H$ to $F$. More explicitly,
given a $G\in\mathcal F$, define
\begin{equation}
\label{3.1}
m_+(z)=G(\varphi^{-1}(z)), \quad m_-(z) =-\overline{G(\varphi^{-1}(\overline{z}))}; \quad z\in\C^+ .
\end{equation}
Clearly, $m_{\pm}\in\mathcal F$, so there is a (unique) $H\in\mathcal C(\R)$ that has $m_{\pm}$ as its $m$ functions.

It is also clear, by construction, that $F(\lambda;H)=G(\lambda)$, or at least it would be if we already knew that $H\in\mathcal R(U)$.
It remains to verify this last claim. The limits
\[
m_{\pm}(x)\equiv \lim_{y\to 0+} m_{\pm}(x+iy)
\]
exist for all $x\in U$ and since $x=\overline{x}\in\Omega$, they agree with what we obtain
by simply plugging $z=x$ into \eqref{3.1}. In particular, $m_+(x)=-\overline{m_-(x)}$ for all $x\in U$, as required.

As briefly discussed above, for any Herglotz functions $G_n,G$, we have $G_n\to G$ locally uniformly on $\C^+$ if and only if $G_n(z)\to G(z)$ uniformly
on $z\in K$ for a fixed compact set $K\subseteq\C^+$ with an accumulation point. In particular, it is now obvious that $F_n\to F$ if and only if
$m^{(n)}_{\pm}\to m_{\pm}$, and this establishes the additional claims about the continuity of the map $H\mapsto F$ and its inverse.
\end{proof}
\begin{Theorem}
\label{T3.2}
Let $H\in\mathcal R(U)$. Then $H\in\mathcal D$ if and only if $F(i;H)=i$.
\end{Theorem}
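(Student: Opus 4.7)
The key first step is the computation $\varphi(i) = 2i/(i^2+1) = \infty$, so $F(i;H) = M(\infty)$. Since $M$ extends holomorphically through the puncture at infinity (as in the proof of Lemma \ref{L1.2}), $M(\infty) \in \overline{\C^+}$ is a well-defined value, and $m_+(iy) \to M(\infty)$ as $y \to \infty$.

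Next I would invoke the standard high-energy asymptotic for canonical systems: if $H \in \mathcal{C}[0,\infty)$ is continuous at the origin with $\det H(0+) = 1$ and $H(0+) = \bigl(\begin{smallmatrix} a & b \\ b & c \end{smallmatrix}\bigr)$, then $m_+(iy) \to (i-b)/a$. This limit is what one obtains for the constant-coefficient system $H(x) \equiv H(0+)$ by diagonalizing $JH(0+)$ (eigenvalues $\pm i$) and picking out the decaying solution. In particular $m_+(iy) \to i$ if and only if $H(0+) = I$. The forward direction is then immediate: if $H \in \mathcal{D}$, then $H(0) = I$ and $\det H = 1$, so $M(\infty) = i$.

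For the converse, suppose $M(\infty) = i$. The case $H \in \mathcal{Z}$ is ruled out, because then $m_+$ would be a real constant, forcing $M(\infty) \in \R_\infty$. So Lemma \ref{L1.2} and Theorem \ref{T1.2} apply on the right half-line and give that $H$ is real analytic on $[0,\infty)$ with $\det H(x) = 1$; in particular $H$ is continuous at $0+$, and the asymptotic forces $H(0+) = I$. The left half-line is handled symmetrically: from $m_-(z) = -\overline{M(\bar z)}$ we get $m_-(iy) \to -\overline{i} = i$, and the analogous left-sided statements yield $H(0-) = I$. Combined, $H(0) = I$, $H \in AC(\R)$, and $\det H = 1$, so $H \in \mathcal{D}$. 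The main obstacle in this plan is the identification of the high-energy limit $\lim m_+(iy)$ in terms of $H(0+)$: existence of the limit is automatic from the holomorphic extension of $M$, but pinning it down to $(i-b)/a$ relies on the standard asymptotic theory of $m$-functions for canonical systems, available in \cite{Rembook}.
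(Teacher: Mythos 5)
Your forward direction is essentially sound: for $H\in\mathcal D$ the associated Dirac $m$-function satisfies $m_+(z)\to i$ along suitable rays (this is the classical Dirac asymptotic, \cite{EHS,HarAsh}, used in the paper as well), and since $\varphi(i)=\infty$ this gives $F(i)=i$. That part matches the paper.

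The converse, however, is circular as written. You invoke Lemma \ref{L1.2} and then Theorem \ref{T1.2} to conclude that $H$ admits a $\det H\equiv 1$ normalization which is real analytic, and from there you argue that the high-energy asymptotics pin down $H(0)=I$. But the proof of Theorem \ref{T1.2} given in Section 6 of the paper begins by reproving precisely the statement of Theorem \ref{T3.2} (in the slightly more general Hypothesis \ref{H1.1} form): ``It is still true that if $H\in\mathcal C[0,\infty)$ satisfies Hypothesis \ref{H1.1}, then $H\in\mathcal D$ if and only if $m_+(\infty)=i$. The argument is literally the same as in the proof of Theorem \ref{T3.2}.'' The chain Theorem \ref{T3.2} $\Rightarrow$ Theorem \ref{T5.1} $\Rightarrow$ Corollary \ref{C5.1} $\Rightarrow$ Theorem \ref{T1.2} runs through Theorem \ref{T3.2}, so you cannot use Theorem \ref{T1.2} here; you would need an independent argument showing that $F(i)=i$ forces $\det H(x)>0$ and $H\in AC$, and that is exactly the nontrivial inverse-spectral content the theorem is asserting. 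The paper supplies this via Gelfand--Levitan theory: it reads off $m_{\pm}(x)=i+O(1/|x|)$ on $U$, observes that the representing measure $\rho$ is then $dx/\pi$ plus an absolutely continuous correction on $U$ plus a compactly supported remainder, concludes $\phi=(d\rho-dx/\pi)\,^{\widehat{\ }}\in L^2_{\mathrm{loc}}$, and appeals to the sufficient condition of \cite{ZengPhD} for $\rho$ to be a Dirac spectral measure, then fixes the additive constant using $m_+(\infty)=i$. Nothing in your proposal replaces that step.

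A secondary concern, which you flag yourself, is the asymptotic $m_+(iy)\to(i-b)/a$ for a general canonical system merely continuous at $0$ with $\det H(0)=1$. For Dirac systems this is standard, but for canonical systems the high-energy behavior of $m$ is delicate and depends on the regularity of $H$ near $0$; I do not believe this exact statement is in \cite{Rembook} at the generality you need, and even if it were, it only yields $H(0+)=I$ and does not by itself establish $H\in AC$ or $\det H\equiv 1$ away from $0$, which are also required for membership in $\mathcal D$.
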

We are here dealing with an inverse spectral problem: we want to decide whether or not given spectral data (here, an $F$ function) come from
a Dirac operator. These questions are classical, at least in the version for other spectral data such as spectral measures or $m$ functions.
The foundational paper is \cite{GasLev}; see also \cite[Chapter 12]{LevSar} for a presentation of this work.

The whole area quickly becomes very technical. What is worse, to the best of our knowledge, the characterization of the spectral measures (or $m$ functions)
of Dirac operators with \textit{general }potentials $W\in\mathcal W$ has still not been established fully rigorously yet, even though it is clear what the expected answer is.
\cite{GasLev, LevSar} assume continuity of the potential; \cite{ZengPhD} can handle the case $\phi\in L^2_{\textrm{loc}}$, with $\phi$ being an auxiliary
function that also plays a major role in the Gelfand-Levitan theory \cite{GasLev,HarAsh,LevSar}, but this is again not quite the general situation, when
$\phi\in L^1_{\textrm{loc}}$.

Fortunately for us here, we can bypass these thorny issues almost completely since we are interested in $H\in\mathcal R (U)$, which is a strong additional
assumption, and thus a much simplified version of inverse spectral theory is sufficient for our purposes. Still, we will be content with only sketching the proof
of Theorem \ref{T3.2}, and we refer the reader to the sources mentioned above and especially \cite[Section 6.4]{Rembook} and \cite{ZengPhD} for further details.
\begin{proof}
One direction can be settled comfortably by recalling that Dirac $m$ functions satisfy $m(z)\to i$ when $z\to\infty$ along suitable paths \cite{EHS,HarAsh}.
Since $\varphi(i)=\infty$, this means that the $F$ function of an $H\in\mathcal D\cap\mathcal R(U)$ must satisfy $F(i)=i$.

Conversely, suppose now that this condition holds. This implies that $m_{\pm}(x) = i + O(1/|x|)$ for $x\in U$. Let's focus on $m_+$, for convenience,
but of course an analogous argument will apply to $m_-$. We have the Herglotz representation
\begin{equation}
\label{3.4}
m_+(z) = a + \int_{-\infty}^{\infty} \left( \frac{1}{t-z}-\frac{t}{t^2+1}\right) \, d\rho(t) .
\end{equation}
The representing measure $\rho$, which is a spectral measure for the half line operator, is purely absolutely continuous on $U$ since $m$ is holomorphic there,
and its density satisfies $(1/\pi)\Im m(x)=1/\pi +O(1/x)$. Since a compactly supported measure has an entire Fourier transform, we conclude that
\[
\phi \equiv (d\rho(x) - dx/\pi)\,^{\widehat{\:}} \in L^2_{\textrm{loc}} ,
\]
and this is the sufficient condition from \cite{ZengPhD} that was already briefly mentioned above. (We could extract more information about
$\phi$ from what we are given about the asymptotics of $m_+$, and thus we do not really need the full force of this result.)

We conclude that $\rho$ is the spectral measure of some Dirac equation on the half line $x\ge 0$. Since $\rho$ determines $m_+$ up to the
additive real constant $a$ from \eqref{3.4}, it follows that for suitable $b\in\R$, the Herglotz function $m_+(z)+b$ will be the $m$ function
of that same Dirac equation. But we already have $m_+(\infty)=i$, and adding a $b\not= 0$ would lead to a function that no longer satisfies this
necessary condition, so $m_+$ itself is a Dirac $m$ function.
\end{proof}
We can state much of what Theorem \ref{T3.2} says in more abstract style. This will also add some
perspective to concrete results we will prove later.

We introduce
\[
\mathcal R_0(U) =\{ H\in\mathcal R(U): \sigma(H)\subseteq \overline{U} \} .
\]
It will also be convenient to have a more succinct notation available for the Dirac operators in these spaces.
\begin{Definition}
\label{D3.1}
We write $\mathcal D(U)=\mathcal D\cap\mathcal R(U)$, $\mathcal D_0(U)=\mathcal D\cap\mathcal R_0(U)$.
\end{Definition}
Of course, $\mathcal D_0(U),\mathcal R_0(U)$ are much smaller spaces than $\mathcal D(U), \mathcal R(U)$.
Especially for Jacobi and Schr{\"o}dinger equations, such operators
have been studied extensively \cite{GesHol1,GesHol2,Mum,Teschl};
they are usually called \textit{finite gap }operators, referring to the structure of the spectrum in the more general
situation when $\overline{U}$ is replaced by a closed set with finitely many gaps.

$\mathcal D_0(U)$ is a circle topologically, and $\mathcal R_0(U)$ itself is homeomorphic to $S^3$;
in the latter case, if the trivial canonical systems with constant real $m$ functions are thrown out, then we obtain a (non-compact)
solid torus $\mathcal R_0(U)\setminus\mathcal Z\cong D\times S^1$. Please see \cite{FR} for much more on this topic.
\begin{Theorem}
\label{T3.3}
$\{ F(\lambda;H): H\in\mathcal D(U)\}\subseteq\mathcal F$ is a compact convex set.
Its extreme points are
\[
\{F(\lambda;H): H\in\mathcal D_0(U) \}.
\]
In the $\tr W=0$ normalization,
these correspond exactly to the constant Dirac potentials
\begin{equation}
\label{3.21}
W_{\beta}(x) = \begin{pmatrix}\sin \beta & \cos \beta\\
\cos \beta & -\sin \beta \end{pmatrix} , \quad 0\le\beta <2\pi .
\end{equation}
\end{Theorem}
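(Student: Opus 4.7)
The plan is to reduce the theorem to the analysis of the Herglotz-function set
\[
\mathcal G = \{ F \in \mathcal F : F(i) = i \},
\]
which by Theorems \ref{T3.1} and \ref{T3.2} is exactly the image $\{F(\cdot; H) : H \in \mathcal D(U)\}$. Compactness of $\mathcal G$ follows from compactness of $\mathcal F$ and the continuity of point evaluation. Convexity is equally straightforward: for $F_1, F_2 \in \mathcal G$ and $t \in [0,1]$, the combination $tF_1 + (1-t)F_2$ is a holomorphic function $\C^+\to\overline{\C^+}$ (since $\overline{\C^+}$ is convex), and the affine condition $F(i)=i$ is preserved.

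For the extreme points, I would apply the Herglotz representation to a general $F \in \mathcal G$,
\[
F(z) = a + bz + \int_{-\infty}^{\infty} \left( \frac{1}{t-z} - \frac{t}{t^2+1}\right) d\rho(t),
\]
and substitute $z=i$ to pin down $a=0$ and $b + \int d\rho(t)/(t^2+1) = 1$. After passing to $d\mu(t) = d\rho(t)/(t^2+1)$, this rewrites as
\[
F(z) = \int_{\R\cup\{\infty\}} \frac{1+tz}{t-z}\, d\nu(t),
\]
where $\nu = \mu + b\,\delta_{\infty}$ is a probability measure on the one-point compactification of $\R$ and the kernel has been extended by its limit $(1+tz)/(t-z) \to z$ as $t\to\infty$. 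Herglotz uniqueness turns this into an affine bijection between probability measures on $\R\cup\{\infty\}$ and $\mathcal G$. Since the extreme points of the probability measures on a compact Hausdorff space are the Dirac masses, the extreme points of $\mathcal G$ are the M\"obius functions
\[
F_s(\lambda) = \frac{1+s\lambda}{s-\lambda}\quad (s\in\R), \qquad F_{\infty}(\lambda) = \lambda .
\]

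Next I would identify each $F_s$ as the $F$ function of a constant Dirac potential and verify that the latter lies in $\mathcal D_0(U)$. Solving the Dirac equation with constant $W_\beta$ via the plane-wave ansatz $y(x)=e^{ikx}v$ leads to the dispersion relation $z^2 = 1+k^2$; this shows at once that $\sigma(W_\beta)=\overline U$, so $W_\beta \in \mathcal D_0(U)$. The same ansatz yields
\[
m_+(z; W_\beta) = \frac{i\sqrt{z^2-1} - \cos\beta}{z + \sin\beta},
\]
with the branch fixed by $\sqrt{z^2-1}\sim z$ at infinity. Under the substitution $z=\varphi(\lambda)$ the identity $z^2-1 = -(\lambda^2-1)^2/(\lambda^2+1)^2$ kills the radical, and after the half-angle identities the quotient collapses to
\[
F(\lambda; W_\beta) = \frac{\lambda\sin(\beta/2) - \cos(\beta/2)}{\lambda\cos(\beta/2) + \sin(\beta/2)} = F_{-\tan(\beta/2)}(\lambda) .
\]
The map $\beta \mapsto -\tan(\beta/2)$ is a bijection from $[0,2\pi)$ onto $\R\cup\{\infty\}$, so the circle $\{W_\beta\}$ is carried bijectively onto the circle of extreme points of $\mathcal G$.

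It remains to show that $\mathcal D_0(U)$ contains no Dirac operators beyond the $W_\beta$'s. Since $\mathcal D_0(U)$ is topologically a circle (as noted above, following \cite{FR}) and the inclusion $\{W_\beta\}\hookrightarrow\mathcal D_0(U)$ is a continuous injection from $S^1$ into $S^1$, it is automatically surjective; hence $\mathcal D_0(U) = \{W_\beta\}$ in the $\tr W=0$ normalization, and the extreme points of $\mathcal G$ coincide with $\{F(\cdot; H) : H \in \mathcal D_0(U)\}$. The main obstacle I anticipate is the extreme-point identification: recognizing the Choquet-type affine parametrization of $\mathcal G$ by probability measures on $\R\cup\{\infty\}$ is the conceptual crux, while the branch-tracking in the explicit computation of $F(\cdot; W_\beta)$ is where the remaining details become slightly fiddly.
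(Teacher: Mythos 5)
Your proposal is correct and follows essentially the same route as the paper: identify $\{F : H\in\mathcal D(U)\} = \{F\in\mathcal F : F(i)=i\}$ via Theorems \ref{T3.1}, \ref{T3.2}, establish compactness and convexity directly, pass to the Herglotz representation with kernel $(1+t\lambda)/(t-\lambda)$ and a probability measure $\nu$ on $\R_\infty$, and read off that the extreme points are the M\"obius functions $R_\theta\cdot\lambda$, which are then matched to the constant potentials $W_\beta$. Your explicit plane-wave computation of $m_+(z;W_\beta)$ and the resulting identity $F(\lambda;W_\beta)=F_{-\tan(\beta/2)}(\lambda)$ checks out, including the branch bookkeeping through $z^2-1 = -(\lambda^2-1)^2/(\lambda^2+1)^2$; the paper instead does the single case $\beta=\pi$ and obtains the rest by acting with $R_\theta$, but these are just two ways to organize the same calculation.

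The one place where you genuinely deviate is the final step, showing $\{W_\beta\}=\mathcal D_0(U)$. The paper sketches two arguments: (a) comparing against the standard \cite{FR} parametrization, possibly streamlined via transitivity of the $\operatorname{SO}(2)$-action on $\mathcal D_0(U)$, and (b) a direct spectral analysis showing that $(-1,1)$ is avoided only when $\nu=\delta_\theta$. You instead observe that $\beta\mapsto W_\beta$ is a continuous injection of $S^1$ into $\mathcal D_0(U)\cong S^1$, hence a homeomorphism onto, by a standard compactness/connectedness (or invariance-of-domain) argument. This is a clean alternative that uses only the topological statement from \cite{FR} (that $\mathcal D_0(U)$ is a circle) rather than its explicit parametrization or the transitivity of the rotation action; it trades the group-theoretic input for a small amount of point-set topology. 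Either way the reliance on \cite{FR} for the structure of $\mathcal D_0(U)$ is the same, so this is a variation in execution rather than a different key idea.
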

We need both defining properties of the $H\in\mathcal D(U)=\mathcal D\cap\mathcal R(U)$ here: $\mathcal D$ is convex, but not compact, while $\mathcal R(U)$
is compact, but not convex in an obvious way (the vector space operations we would like to use do not extend to $F\equiv\infty$).
\begin{proof}
Since $\mathcal F$ itself is compact,
the set $\{F\in\mathcal F: F(i)=i\}$ is obviously compact and convex. To find its extreme points, recall the Herglotz representation formula, in the version
\begin{equation}
\label{3.5}
F(\lambda) = a + \int_{\R_{\infty}} \frac{1+t\lambda}{t-\lambda}\, d\nu(t) ,
\end{equation}
with $a\in\R$, and $\nu$ is a finite Borel measure on the compact space $\R_{\infty}=\R\cup\{\infty \}$.
Observe that
\[
\frac{1+t\lambda}{t-\lambda} = \frac{1}{\sqrt{t^2+1}} \begin{pmatrix} t & 1 \\ -1 & t \end{pmatrix} \cdot\lambda ,
\]
and these matrices range precisely over $\operatorname{PSO}(2)=\operatorname{SO}(2)/\{ \pm 1\}$ when $t$ varies over $\R_{\infty}$.
Thus we obtain a third and particularly elegant version of the Herglotz representation formula:
\[
F(\lambda) = a + \int_{[0,\pi)} R_{\beta}\cdot\lambda\, d\mu(\beta) .
\]
Since $F(i)=a+i\nu(\R_{\infty})=a+i\mu([0,\pi))$, the Herglotz functions with $F(i)=i$ are exactly those of the form
\begin{equation}
\label{3.7}
F(\lambda) = \int_{[0,\pi)} R_{\beta}\cdot\lambda\, d\mu(\beta) , \quad\mu([0,\pi))=1 .
\end{equation}
It is now obvious that the extreme points correspond exactly to the measures $\mu=\delta_{\theta}$, so are given by the $F$
functions $F(\lambda)=R_{\theta}\cdot\lambda$.

For constant Dirac potentials, the Dirac equation can be solved explicitly and everything can be worked out.
So it is now straightforward to confirm that the potentials listed in \eqref{3.21} indeed give exactly the $F$ functions $F=R_{\theta}\cdot\lambda$.

In fact, it suffices to do this calculation for (say)
\[
W(x)\equiv \begin{pmatrix} 0 & -1 \\ -1 & 0 \end{pmatrix} .
\]
We find that $F(\lambda)=\lambda$,
corresponding to $\mu=\delta_0$ in \eqref{3.7}. The remaining functions $R_{\theta}\cdot\lambda$
can then be obtained by acting on this $F$ by the rotations $R_{\theta}$. Compare our discussion in Section 2.
In particular, this material shows that $F(\lambda)=R_{\theta}\cdot \lambda$ is the $F$ function of the (constant) Dirac potential
\[
W(x) = R_{\theta}\begin{pmatrix} 0 & -1 \\ -1 & 0 \end{pmatrix} R^t_{\theta} = \begin{pmatrix} \sin 2\theta & -\cos 2\theta\\
-\cos 2\theta & -\sin 2\theta \end{pmatrix} .
\]

We must still show that $\{ W_{\beta}\}=\mathcal D_0(U)$, and we sketch two possible ways of doing this.

The first argument is quick, but relies on a standard method of parametrizing the operators from $\mathcal D_0(U)$, which we didn't discuss here. See, for example,
\cite[Sections 2, 3]{FR} for this material. A comparison of the $m$ functions obtained in this way with the $F$ functions $F(\lambda)=R_{\theta}\cdot\lambda$
obtained above will then confirm that we are indeed looking at exactly the elements of $\mathcal D_0(U)$.
This last step will require some stamina if done by a direct brute force calculation. However, we can also match only $F(\lambda)=\lambda$, say,
with an individual element of $\mathcal D_0(U)$, and then we verify that $\{R_{\theta}\}=\operatorname{SO}(2)$ acts
transitively on $\mathcal D_0(U)$ to obtain the desired conclusion more elegantly.

Alternatively, one can show by hand that if $H\in\mathcal D(U)$, then spectrum in $(-1,1)$ is avoided precisely if
$\mu=\delta_{\theta}$ or, equivalently, $F=R_{\theta}\cdot\lambda$. Here, the first step would be to recall that the measures
of $m_{\pm}$ must be discrete on this set, or else there would be essential spectrum there. Then possible discrete eigenvalues can be located
using the values of $m_{\pm}$ on this interval, in completely elementary (if tedious) fashion, by using the criterion $m_+=m_-$.
\end{proof}
It is perhaps interesting to note that Theorem \ref{T3.2} can be quickly recovered from the first part of Theorem \ref{T3.3}.
Indeed, as we just discussed, it is easy to confirm directly that $F=R_{\theta}\cdot\lambda$ is the $F$ function of an
$H\in\mathcal D(U)$. So if we also know that this latter space is compact and convex,
then \eqref{3.7} shows at once that it contains all $F\in\mathcal F$ with $F(i)=i$.
Note that this argument completely avoided any explicit reference to the machinery of inverse spectral theory.

Strictly speaking, it has established only one half of Theorem \ref{T3.2}, but the other half was much easier.
Moreover, that part, too, could be deduced from Theorem \ref{T3.3} in abstract style if we combine the identification
of the extreme points of $\mathcal D(U)$ with Choquet theory \cite{Phe} and \eqref{3.7}.
\section{Estimates on reflectionless Dirac potentials}
We now analyze in detail the Dirac potentials $W\in\mathcal D(U)$. The method we are going to use
goes back to Marchenko \cite{Mar}, and it was also presented in detail in \cite{HMcBR}. See also \cite{MykSush1}.
\begin{Theorem}
\label{T4.1}
Let $H\in\mathcal D(U)$, $U=\R\setminus [-1,1]$. We use the $\tr W=0$ normalization for the associated Dirac potentials, so we write
\[
W(x) = \begin{pmatrix} a(x) & b(x) \\ b(x) & -a(x) \end{pmatrix} ,
\]
with $a,b\in L^1_{\operatorname{loc}}(\R)$ uniquely determined by $H$ or the $F$ function of $H$.

Then $a,b$ are real analytic, and these functions and all their derivatives are bounded on $\R$.
They have holomorphic continuations $a(z), b(z)$ to a strip $\{|\Im z|< h\}$. Moreover,
\[
\|W(x)\| = \sqrt{a^2(x)+b^2(x)} \le 1 ,
\]
and equality $\|W(x_0)\|=1$ at a single $x_0\in\R$ holds if and only if
\[
W(x) \equiv \begin{pmatrix} \sin\beta & \cos\beta\\ \cos\beta & -\sin\beta \end{pmatrix}
\]
for some $\beta\in [0, 2\pi)$.

Finally, we have the formula
\begin{equation}
\label{4.9}
b(0)+ia(0)=-F'(i;H) .
\end{equation}
\end{Theorem}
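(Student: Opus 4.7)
My approach has three main steps: derive formula \eqref{4.9} from the asymptotics of the Dirac $m$ function at infinity; use \eqref{4.9} together with shift invariance and the Herglotz representation \eqref{3.7} to obtain the bound $\sqrt{a^2+b^2}\le 1$ and the equality case; and finally, address real analyticity and the holomorphic extension to a strip, following the Marchenko-type method of \cite{Mar,HMcBR}.

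For \eqref{4.9}, I would start from the Riccati equation $m' = -(a+z)m^2 - 2bm + (a-z)$ satisfied by $m(x,z) = y_1/y_2$, which is immediate from the Dirac system written componentwise. Inserting the ansatz $m(0,z) = i + c_1/z + O(1/z^2)$ and matching coefficients of $z^0$ yields $c_1 = -b(0) - ia(0)$. I then transfer this to a statement about $F$ via the conformal change $z=\varphi(\lambda)$: the expansion $\varphi(\lambda) = (\lambda - i)^{-1} - i/2 + O(\lambda - i)$ near $\lambda = i$ gives $F(\lambda;H) = i + c_1(\lambda - i) + O((\lambda - i)^2)$, so $F'(i;H) = c_1 = -(b(0)+ia(0))$, which is \eqref{4.9}.

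For the bound and the equality case, I would apply \eqref{4.9} along the orbit of shifts. For each $t \in \R$, the translated potential $W(t+\cdot)$ is still reflectionless on $U$ and has $\tr = 0$, so it corresponds to some $H_t \in \mathcal D(U)$ (after the gauge transformation that restores $H_t(0)=I$ without affecting the $F$ function). Then \eqref{4.9} gives $b(t)+ia(t) = -F'(i;H_t)$. Using the Herglotz representation \eqref{3.7} with probability measure $\mu_t$, a short calculation yields $F'(i;H_t) = \int_{[0,\pi)} e^{-2i\beta}\, d\mu_t(\beta)$, hence $|F'(i;H_t)| \le 1$ and $\sqrt{a^2(t)+b^2(t)} \le 1$. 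Equality at some $x_0$ forces $\mu_{x_0}$ to be a point mass $\delta_{\beta_0}$, since the integrand has unit modulus everywhere, and Theorem \ref{T3.3} then identifies $H_{x_0}$ with a constant Dirac potential $W_{\beta_0}$; since the shifted potential is constant, $W$ must equal $W_{\beta_0}$ on all of $\R$.

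The remaining step, real analyticity together with the holomorphic extension to a strip $\{|\Im z|<h\}$, is the main obstacle. The plan is to show that $t \mapsto F(\cdot; H_t)$, which is a priori only continuous on $\R$, in fact extends holomorphically in $t$ to a complex strip; analyticity of $a,b$ and uniform bounds on all their derivatives then follow by Cauchy estimates. One route combines the M\"obius action of the Dirac transfer matrix $T(t,z)$ (entire in $z$) on the $m$ function with the spectral integral representation, and uses that reflectionlessness on $U$ concentrates the spectral measures on $\overline U$, so that the distance from the spectrum to the gap $(-1,1)$ controls the strip width $h$. I would model this last step on the Schr\"odinger computation in \cite{HMcBR}, adapting it to exploit the $m(\infty)=i$ asymptotics and the entire-in-$z$ character of the Dirac transfer matrix.
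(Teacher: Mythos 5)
Your treatment of \eqref{4.9} and the norm bound is correct and, in one respect, slightly cleaner than the paper's. You work directly in the $\tr W=0$ gauge: the Riccati equation $m'=-(a+z)m^2-2bm+(a-z)$ is right, inserting $m=i+c_1/z+O(1/z^2)$ and matching the $z^0$ coefficient gives $0=2a-2ic_1-2bi$, hence $c_1=-b-ia$; transferring through $\varphi(i+h)=1/h-i/2+O(h)$ gives $F'(i)=c_1$ and hence \eqref{4.9}. The paper instead derives the analogous formula $f_1=-q-ip$ in the $W_{11}\equiv 0$ gauge and then converts to $a,b$ at the end; your route avoids that conversion. Your bound $|F'(i;H_t)|\le 1$ from the representation \eqref{3.7} is equivalent to the paper's Lemma \ref{L4.1} with $n=1$ (which uses the $\nu$ representation \eqref{4.2} instead of the $\mu$ version), and the equality case, via the triangle-inequality rigidity forcing $\mu_{x_0}=\delta_{\beta_0}$ and Theorem \ref{T3.3}, matches. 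One caveat worth stating explicitly, which the paper flags: the paper shifts the $W_{11}\equiv 0$ potential while you shift the $\tr W=0$ potential, and these two orbits $\{H_t\}$ genuinely differ (the acting group is non-abelian; see Theorem \ref{T2.3}). Your version still gives the bound because the two families of $F$ functions differ only by rotations $R_{\alpha(t)}$ fixing $i$, so $|F'(i)|$ is unchanged — but this is a point you should check rather than take for granted.

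The real-analyticity part is where your proposal does not give a workable route, and this is also the part the paper regards as the main content (it is split off as Theorem \ref{T6.1} and Lemma \ref{L4.2}). Your suggestion is to show that $t\mapsto F(\cdot;H_t)$ extends holomorphically in $t$ via the M\"obius action of the transfer matrix $T(t,z)$. But $T(t,z)$ solves an ODE in $t$ whose coefficients are $W(t)$; it is holomorphic in $t$ only if $W$ already is, which is what you are trying to prove, so as written the argument is circular. The remark that the distance from the spectrum to $(-1,1)$ "controls the strip width" is true in spirit but does not by itself produce an argument. What actually works (and what the paper does) is: use the Riccati equation in the $W_{11}\equiv 0$ gauge to derive the coefficient recursion \eqref{6.18} for the Taylor coefficients $g_n(t)$ of the shifted $m$ function near $z=\infty$; obtain uniform bounds $|g_n(t)|\le(3/r)^n$ from a normal-family estimate (Lemma \ref{L6.1}); then bound $g_1^{(N)}(t)$ by a majorant sequence that is recognized as the Taylor coefficients of the solution of an explicit Cauchy problem \eqref{4.84}, via Cauchy--Kovalevskaya. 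This gives $|g_1^{(N)}(t)|\le CM^N N!$ and hence analyticity with a uniform strip. Saying "model this on \cite{HMcBR}" points in the right direction, but the specific mechanism you sketched beforehand would not carry the proof.
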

We just encountered these special constant Dirac potentials before, in \eqref{3.21}. As we discussed in the context of Theorem \ref{T3.3},
the collection of these is exactly $\mathcal D_0(U)$, and the corresponding $F$ functions are the extreme points of
$\{F(\lambda; H): H\in\mathcal D(U)\}$. Now the various pieces
fit together neatly: $b(0)+ia(0)$ is a linear functional of $F$, by \eqref{4.9}, so will assume its maximum at the extreme points of the compact convex set $\{ F\}$.
\begin{proof}
The part about $W$ being real analytic has been split off as Theorem \ref{T6.1} of the next section, and we will prove it there, in the more general
version for Dirac potentials satisfying Hypothesis \ref{H1.1}. (The reader has already seen the still more general version
for arbitrary canonical systems, as Theorem \ref{T1.2}; this we prove in Section 6.) It will also be convenient to discuss the boundedness
of the derivatives of $W$ in this setting.

Also, the result looks best in the $\tr W=0$ normalization, so we gave it in this form,
but the asymptotic analysis in Lemma \ref{L4.2} below will become slightly easier for the normalization
from Proposition \ref{P2.1}. So this is what we are going to use here, and we must then translate back to the $\tr W=0$ normalization to obtain the result as stated.

So, given $H\in\mathcal D(U)$, we will work with the (unique) associated Dirac potential of the form
\begin{equation}
\label{4.10}
W(x) = \begin{pmatrix} 0 & q(x) \\ q(x) & -2p(x) \end{pmatrix} ,
\end{equation}
with $p,q\in L^1_{\textrm{loc}}(\R)$.

A key role will be played by the asymptotics of $m_+(z;W)$ as $|z|\to\infty$. By the definition of the $F$ function, we have
\begin{equation}
\label{4.1}
m_+(\varphi(i+h)) = F(i+h) = i+\sum_{n=1}^{\infty} f_n h^n .
\end{equation}
Recall that $\varphi$ was given by \eqref{phi}, and $\varphi(i)=\infty$. The power series expansion from \eqref{4.1}
is valid for $|h|<1$. Its first coefficient $f_0=i$ has been identified with the help of Theorem \ref{T3.2}.
For the first equality of \eqref{4.1}, we also need to stay inside the semi-disk $|i+h|\le 1$; otherwise, $F$ would deliver the other half line $m$ function $m_-$.

We can obtain more precise information on the coefficients $f_n$ from the Herglotz representation of $F$. Since $F(i)=i$, it reads
\begin{equation}
\label{4.2}
F(\lambda) = \int_{\R_{\infty}} \frac{1+t\lambda}{t-\lambda}\, d\nu(t), \quad \nu(\R_{\infty})=1 .
\end{equation}
\begin{Lemma}
\label{L4.1}
We have $| f_n| \le 1$ for all $n\ge 1$. Moreover, $|f_1|=1$ if and only if
$\nu =\delta_{t_0}$ for some $t_0\in\mathbb R_{\infty}$. For $n\ge 2$, we have $|f_n|=1$ if and only if $\nu=\delta_0$.
\end{Lemma}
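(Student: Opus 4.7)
\medskip

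\noindent\textit{Proof plan for Lemma \ref{L4.1}.} The idea is to insert the Herglotz representation \eqref{4.2} (or, equivalently, the $\operatorname{PSO}(2)$-version \eqref{3.7} obtained in the proof of Theorem \ref{T3.3}) into the power series \eqref{4.1} and pull the expansion inside the integral. Thus I would fix $\beta\in[0,\pi)$, write $g_\beta(\lambda)=R_\beta\cdot\lambda$, expand $g_\beta(i+h)$ as a power series in $h$, integrate against $d\mu(\beta)$, and obtain $f_n$ as a closed integral expression. A triangle inequality then gives $|f_n|\le 1$, and the equality discussion becomes the question of when the triangle inequality is sharp.

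The actual computation is short. Using $\cos\beta\,e^{-i\beta}=(1+u)/2$ and $\sin\beta\,e^{-i\beta}=-i(1-u)/2$ with $u=e^{-2i\beta}$, one finds
\[
g_\beta(i+h)=\frac{i+h(1+u)/2}{1-ih(1-u)/2},
\]
and expanding the denominator as a geometric series gives, after collecting terms,
\[
g_\beta(i+h)=i+\sum_{n=1}^\infty \frac{i^{n-1}u(1-u)^{n-1}}{2^{n-1}}\,h^n.
\]
Integration against $d\mu(\beta)$ (with $\mu([0,\pi))=1$) yields the explicit formula
\[
f_n=\int_{[0,\pi)}\frac{i^{n-1}u(1-u)^{n-1}}{2^{n-1}}\,d\mu(\beta),\qquad u=e^{-2i\beta}.
\]
Since $|u|=1$ and $|1-u|\le 2$, the integrand has modulus $\le 1$, so $|f_n|\le 1$.

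For the equality cases I would analyze the triangle inequality in two steps: first $|f_n|\le\int (|1-u|/2)^{n-1}\,d\mu$, then the bound by $1$. For $n=1$ the first step is an identity, so $|f_1|=1$ iff the argument of $u=e^{-2i\beta}$ is constant $\mu$-a.e., i.e.\ $\mu=\delta_{\beta_0}$, which via the bijection of \eqref{3.7} and \eqref{4.2} corresponds to $\nu=\delta_{t_0}$ for a unique $t_0\in\R_\infty$. For $n\ge 2$ the second step forces $|1-u|=2$ $\mu$-a.e., i.e.\ $u=-1$, i.e.\ $\beta=\pi/2$ $\mu$-a.e., so $\mu=\delta_{\pi/2}$; once this holds the first inequality is trivially sharp as well. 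Translating back: the Herglotz functions corresponding to $\mu=\delta_{\pi/2}$ and to $\nu=\delta_0$ both equal $-1/\lambda$, so $\mu=\delta_{\pi/2}$ corresponds to $\nu=\delta_0$, giving the claimed characterization.

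The only genuinely fiddly step is the power series calculation producing the clean factorization $u(1-u)^{n-1}$; everything after that is bookkeeping. The main conceptual obstacle is to make sure the bijection between the $\mu$-representation on $[0,\pi)$ and the $\nu$-representation on $\R_\infty$ is used correctly when restating the equality cases in terms of $\nu$, which is done most painlessly by directly matching $F$ on the two distinguished measures as above.
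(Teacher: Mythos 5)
Your proof is correct and follows essentially the same strategy as the paper's: insert the Herglotz representation into the Taylor expansion at $\lambda=i$, express $f_n$ as an integral of the $n$-th coefficient of the ``extreme-point'' functions, and then bound and analyze that kernel. The only difference is the choice of parametrization: the paper differentiates \eqref{4.2} under the integral sign to obtain
\[
f_n=\int_{\R_\infty}\frac{1+t^2}{(t-i)^{n+1}}\,d\nu(t),
\]
whose integrand has modulus $(1+t^2)^{(1-n)/2}$, from which all claims follow immediately; you instead use the $\operatorname{PSO}(2)$ form \eqref{3.7} and a geometric-series expansion to arrive at the equivalent kernel $i^{n-1}u(1-u)^{n-1}/2^{n-1}$ with $u=e^{-2i\beta}$. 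Both are correct; the paper's route is a bit shorter since no series manipulation is needed, while yours makes the ``argument constant a.e.'' equality condition especially transparent because the modulus-one factor $u$ is isolated. Your translation between $\mu=\delta_{\pi/2}$ and $\nu=\delta_0$ (both giving $F(\lambda)=-1/\lambda$) is also the right way to reconcile the two parametrizations.
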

\begin{proof}[Proof of Lemma \ref{L4.1}]
We may differentiate \eqref{4.2} under the integral sign. This gives, for $n\ge 1$,
\[
f_n = \frac{ F^{(n)}(i)}{n!} = \int_{\R_{\infty}} \frac{1+t^2}{(t-i)^{n+1}} \, d\nu(t) .
\]
All claims are immediate consequences of this formula.
\end{proof}

We now consider these $m$ functions $m(t;z)\equiv m_+(z; t\cdot W)$ for the shifted potentials
$(t\cdot W)(x)=W(t+x)$ also. (To be completely clear on this point, what we shift is the renormalized
potential with $W_{11}\equiv 0$. This is not the same as first shifting the $\tr W=0$ version from the statement of Theorem \ref{T4.1}
and then changing the normalization; in particular, this second version would have produced a different orbit $\{ m(t;z): t\in\R\}$. See again
Theorem \ref{T2.3} for context and recall that the acting group is non-abelian.)

We must also remember that $\mathcal D(U)$ is invariant under shifts, so $t\cdot W\in\mathcal D(U)$ again \cite[Theorem 7.9(a)]{Rembook}.

Clearly, $m(t;z)=f(t,z)=f_1(t,z)/f_2(t,z)$, with $f=y$ still denoting the unique, up to a factor, solution of \eqref{Dirac} that is square
integrable on right half lines. In particular, $m$ is an absolutely continuous function of $t$ for fixed $z\in\C^+$. It solves the Riccati equation
\[
\frac{dm(t;z)}{dt} = -zm^2(t;z) - 2q(t)m(t;z) + 2p(t)-z .
\]
We want to analyze this equation near $z=\infty$, so we take $z=\varphi( i+h)$. A calculation using geometric series shows that
\begin{equation}
\label{4.6}
\varphi (i+h) = \frac{1}{h} - \sum_{n=0}^{\infty} \left( \frac{i}{2}\right)^{n+1} h^n , \quad |h|<1 .
\end{equation}
Let's split off the dominant term and write $m(t;z)=i+g(t;z)$. We then have
\begin{equation}
\label{4.3}
g(t;\varphi(i+h)) = \sum_{n=1}^{\infty} f_n(t) h^n ,
\end{equation}
with $f_n(t)$ of course still defined as the expansion coefficient from \eqref{4.1}, but now for the function $m(t;\varphi(i+h))$.
The new function $g$ solves
\begin{equation}
\label{4.5}
\frac{dg}{dt} = -zg^2 -2(q+iz)g + 2(p-iq) .
\end{equation}
We now plug \eqref{4.3} into this and differentiate the expansion term by term. We obtain
\begin{align}
\label{4.4}
\sum_{n\ge 1}f'_n(t)h^n &= -z\sum_{m,n\ge 1} f_m(t)f_n(t) h^{m+n} \\
& \nonumber \quad\quad -2(q(t)+iz)\sum_{n\ge 1} f_n(t)h^n + 2(p(t)-iq(t)) ,
\end{align}
and here $z$ is short-hand for $z=\varphi(i+h)$.
Of course, this formal calculation has done little towards proving \eqref{4.4} rigorously. For starters, we don't even know at this point if the coefficients
$f_n$ are differentiable. However, \eqref{4.4} is correct, and $f_n\in C^{\infty}(\R)$. This may be established exactly as in \cite{HMcBR} by
temporarily working with the integrated version of \eqref{4.5}. Lemma \ref{L4.1} gives the uniform bounds $|f_n(t)|\le 1$, so convergence of the various series is never
an issue for (say) $|h|<1$. Then we compare coefficients in this rewriting of \eqref{4.4}, and we obtain integrated versions of
the identities that we will derive
below in our lazier presentation of the argument, and then an inductive argument will confirm that $f_n\in C^{\infty}$.
We will leave this technical matter at that and refer the reader to the proof of \cite[Theorem 4.1]{HMcBR} for further details.

So we may now compare the coefficients of $h^N$ to obtain information about the evolution of the $f_n(t)$ without further misgivings.
We of course use the expansion \eqref{4.6} of $z=\varphi(i+h)$ on the right-hand side. Start out perhaps as a quick consistency check by observing
that there are no terms proportional to $h^{-1}$ on the right-hand side even though this is how the expansion of $z$ starts. Next, comparing
coefficients of $h^0$, we find $0 = -2if_1 + 2(p-iq)$, so
\begin{equation}
\label{4.8}
f_1(t) = -q(t)-ip(t) .
\end{equation}
This is essentially \eqref{4.9}. To prove
this latter formula, we must not forget that in the statement of Theorem \ref{T4.1}, we worked with the $\tr W=0$ normalization rather than \eqref{4.10}.
To go back to a traceless $W$, we must act by $\alpha(x)=\int_0^x p(t)\, dt\in\mathcal A_0$ on our current $W$ to obtain
\begin{equation}
\label{4.11}
(\alpha\cdot W)(x) = R_{\alpha(x)}W(x)R^t_{\alpha(x)}+p(x) .
\end{equation}
In particular,
\[
(\alpha\cdot W)(0) = \begin{pmatrix} p(0) & q(0) \\ q(0) & -p(0) \end{pmatrix} ,
\]
so $a(0)=p(0)$, $b(0)=q(0)$, and \eqref{4.9} does follow.

It remains to discuss the bound $\|W(x)\|\le 1$. Since the set $\mathcal D(U)$ as well as the $\tr W=0$ normalization are invariant under shifts of $W$,
it suffices to consider a single point, which for convenience we take as $x=0$. We already proved \eqref{4.9}, and given this identity, the inequality
is immediate from Lemma \ref{L4.1}. This result also identifies the $F$ functions with $\|W(0)\|=1$ as $F=R_{\theta}\cdot\lambda$, and, as we already
discussed in the previous section, these correspond exactly to the potentials $W_{\beta}$ from \eqref{3.21}.
\end{proof}
After this highly promising start with \eqref{4.4}, it is tempting to try to squeeze more out this equation. Comparing the coefficients of $h^1$ produces
\begin{equation}
\label{f2}
f'_1 = -f^2_1 - 2qf_1-2if_2-f_1 .
\end{equation}
For $n\ge 2$, we obtain
\begin{align}
\label{4.12}
f'_n =& -\sum_{j=1}^n f_j f_{n+1-j} + \sum_{k=0}^{n-2} (i/2)^{k+1}\sum_{j=1}^{n-1-k} f_j f_{n-k-j}\\
\nonumber
& -2qf_n - 2if_{n+1} - \sum_{k=0}^{n-1} (i/2)^k f_{n-k} .
\end{align}
In fact, this formula also works for $n=1$ if we interpret the empty sum $\sum_{k=0}^{n-2} \ldots = 0$, as usual.

The sharp estimate on $\|W(x)\|$ of Theorem \ref{T4.1} came from Lemma \ref{L4.1} for $n=1$. We can use these bounds in the same
way for $n>1$ to estimate other combinations of the entries of $W$, which will now involve derivatives. Of course, as a quick glance at
\eqref{4.12} confirms, the concrete expressions get out of hand quickly.
So we will limit ourselves to just one small illustration.
\begin{Theorem}
\label{T4.2}
Let $W\in\mathcal D(U)$, $\tr W=0$; write $W=\left( \begin{smallmatrix} a & b \\ b & -a \end{smallmatrix}\right)$. Then
\[
\left( b^2(x)-a^2(x)+b(x)+b'(x)\right)^2 +\left( 2a(x)b(x)+a(x)+a'(x) \right)^2 \le 4
\]
for all $x\in\R$. Moreover, equality for a single $x_0\in\R$ implies that $a(x)\equiv 0$, $b(x)\equiv 1$.
\end{Theorem}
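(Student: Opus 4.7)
The plan is to push the recursion \eqref{4.12} one step further than was done for Theorem \ref{T4.1} and translate the universal bound $|f_2(t)|\le 1$ from Lemma \ref{L4.1} into the language of $a,b$ and their first derivatives. The single piece of real work will be the gauge bookkeeping, since \eqref{f2} is written in the $W_{11}\equiv 0$ normalization while the statement is formulated in the $\tr W=0$ gauge.

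By shift-invariance of $\mathcal D(U)$ and of the $\tr W=0$ normalization, it suffices to prove the bound at the single base point $x_0=0$. Let $\alpha\in\mathcal A_0$ be the gauge that takes $W$ to the form $\alpha\cdot W=\begin{pmatrix}0&q\\ q&-2p\end{pmatrix}$ from \eqref{4.10}. Since $\alpha(0)=0$, reading off the $(1,1)$ and $(2,2)$ entries of \eqref{2.6} at $x=0$ gives $\alpha'(0)=-a$, $p(0)=a$ and $q(0)=b$. Differentiating \eqref{2.6} once more, using $R_\alpha'=\alpha' R_\alpha J$, and imposing that the $(1,1)$ entry of $(\alpha\cdot W)'$ is again zero forces $\alpha''(0)=-(2ab+a')$; then the remaining entries give
\[
p'(0)=2ab+a',\qquad q'(0)=b'-2a^2.
\]

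Next, I would solve \eqref{f2} algebraically for $f_2$, obtaining
\[
f_2=\tfrac{i}{2}\bigl(f_1'+f_1^2+2qf_1+f_1\bigr),
\]
and substitute $f_1=-q-ip$ from \eqref{4.8}. The imaginary parts of $f_1^2$ and $2qf_1$ cancel, producing the clean identity $f_1^2+2qf_1=-(p^2+q^2)$, so
\[
f_2=\tfrac{1}{2}(p+p')-\tfrac{i}{2}\bigl(p^2+q^2+q+q'\bigr).
\]
Plugging in the values of $p,q,p',q'$ at $x=0$ computed above yields exactly
\[
p+p'=2ab+a+a',\qquad p^2+q^2+q+q'=b^2-a^2+b+b',
\]
so the inequality $4|f_2(0)|^2\le 4$ from Lemma \ref{L4.1} is precisely the claimed bound.

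For the equality case, $|f_2(0)|=1$ forces $\nu=\delta_0$ in the Herglotz representation \eqref{4.2} by the $n\ge 2$ half of Lemma \ref{L4.1}. This pins down $F(\lambda)=-1/\lambda$, which via Theorem \ref{T3.1} determines $H$, and hence the $\tr W=0$ representative of $W$, uniquely; a direct check using the calculation at the end of the proof of Theorem \ref{T3.3} identifies it as the constant potential $W\equiv\begin{pmatrix}0&1\\ 1&0\end{pmatrix}$ from the family \eqref{3.21}, i.e.\ $a\equiv 0$, $b\equiv 1$. Once the relations in the first paragraph are established, everything else is a short verification; the only subtlety is keeping straight that, although we work with $W$ in the $\tr W=0$ gauge of the statement, the coefficients $f_n$ are read off in the $W_{11}\equiv 0$ gauge, and the two must be matched at $x=0$ through $\alpha$ and its first two derivatives.
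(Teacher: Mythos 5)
Your proposal matches the paper's own (sketched) argument exactly: solve \eqref{f2} for $f_2$, substitute \eqref{4.8}, bound $|f_2|\le 1$ via Lemma \ref{L4.1}, and translate the $W_{11}\equiv 0$ quantities $p,q,p',q'$ back to the $\tr W=0$ entries $a,b,a',b'$ at $x=0$; your equality discussion via $\nu=\delta_0$ is also the paper's. The only difference is that you fill in the gauge bookkeeping (the relations $p(0)=a$, $q(0)=b$, $p'(0)=2ab+a'$, $q'(0)=b'-2a^2$) explicitly, which the paper leaves to the reader, and those relations are correct.
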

To \textit{prove }this, we proceed exactly as outlined, and since the argument is very similar to what we just did, we will only sketch it.
We can focus on $x=0$. We combine \eqref{4.8}, \eqref{f2} to produce a formula for $f_2$ in terms of $p,q,p',q'$ at $x=0$.
Then we use the bound $|f_2|\le 1$ from Lemma \ref{L4.1} and we transform back to $a,b$. This will give the stated bound.
Lemma \ref{L4.1} also says that $|f_2|=1$ is only possible
if $\nu=\delta_0$, which is the measure of the $F$ function $F(\lambda)=-1/\lambda$, and, as we saw earlier,
the corresponding $\tr W=0$ normalized Dirac potential is $a\equiv 0$, $b\equiv 1$.
\section{Proof of Theorem \ref{T1.2}, Dirac version}
In this section, we establish a version of Theorem \ref{T1.2} for Dirac operators. Let's give a precise formulation of what we will prove.
\begin{Theorem}
\label{T6.1}
Let $W\in\mathcal W [0,\infty)$, $\tr W=0$. Assume that $m_+(z;W)$ satisfies Hypothesis \ref{H1.1}.
Then $W(x)$ is real analytic. More precisely, the entries of $W(x)$ have holomorphic continuations $W(z)$ to a rounded strip
$S=\{ z\in\C: |z-x|<h \textrm{\rm { for some }} x>0\}$.

Moreover, we have $\|W^{(n)}(x)\| \le B_n$, $x\ge 0$, for all $n\ge 0$.
\end{Theorem}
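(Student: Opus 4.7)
The plan is to adapt the analysis of Section 4 to the weaker Hypothesis 1.1, which lacks the shift-invariance provided by reflectionlessness. First, by Proposition 2.1, one may pass to the normalization $W_{11} \equiv 0$ and write
\begin{equation*}
W = \begin{pmatrix} 0 & q \\ q & -2p \end{pmatrix};
\end{equation*}
the transformation back to the $\tr W = 0$ normalization via the explicit $\alpha \in \mathcal{A}_0$ of \eqref{4.11} preserves real-analyticity and derivative bounds, so it suffices to prove the conclusions for $p, q$. Hypothesis \ref{H1.1} gives $M : D_r \to \overline{\C^+}$ holomorphic with $m_+(z; W) = M(-1/z)$ for large $|z|$ in $\C^+$. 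Since the Dirac $m$ function satisfies $m_+(iy) \to i$ as $y \to \infty$, we have $M(0) = i$, and $M$ is automatically bounded on every compact subset of $D_r$.

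The central technical step is a uniform-in-$t$ version of Hypothesis \ref{H1.1} for the shifted potentials $t \cdot W$, $t \geq 0$: there exist $r_1 > 0$ and $K_1 > 0$ such that $m(t, z) := m_+(z; t \cdot W) = M_t(-1/z)$ with $M_t : D_{r_1} \to \overline{\C^+}$ holomorphic, $M_t(0) = i$, and $|M_t(w)| \leq K_1$ on $|w| \leq r_1/2$, all uniformly in $t \geq 0$. The proof uses the Riccati equation for $g(t, z) = m(t, z) - i$,
\begin{equation*}
g_t = -2izg - zg^2 - 2qg + 2(p - iq) .
\end{equation*}
For large $|z|$, the linear term $-2iz$ rotates $g$ rapidly without amplifying $|g|$, the forcing $2(p - iq)$ produces a bounded response, and the quadratic $-zg^2$ is controlled as long as $|g| = O(1/|z|)$. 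A Duhamel representation combined with a bootstrap argument, analogous to the technique used for Schr\"odinger operators in \cite{HMcBR}, then propagates the initial estimate $g(0, z) = O(1/|z|)$ holomorphically in $z$ for all $t \geq 0$.

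With uniform Hypothesis \ref{H1.1} in hand, the coefficient machinery of Section 4 applies almost verbatim. Expanding $F_t(i+h) = m(t, \varphi(i+h)) = i + \sum_{n \geq 1} f_n(t) h^n$, Cauchy estimates give $|f_n(t)| \leq C R^n$ for all $t \geq 0$. The recurrences \eqref{4.8}, \eqref{f2}, \eqref{4.12}, derived in Section 4 purely from the Riccati equation and without any reflectionless assumption, yield $f_{n+1} = (i/2) f_n' + P_n(f_1, \ldots, f_n, q)$ for polynomials $P_n$. Since $f_1 = -q - ip$, one expresses $q^{(k)}$ and $p^{(k)}$ inductively as polynomials in $\{f_j^{(m)} : j + m \leq k + 1\}$, with leading term proportional to $(-2i)^k f_{k+1}$. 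The uniform bound on $\{f_n\}$ then propagates to $|q^{(k)}(t)|, |p^{(k)}(t)| \leq C A^k$ uniformly in $t \geq 0$; after converting back to $\tr W = 0$, this gives $\|W^{(k)}(x)\| \leq B_k$ on $[0, \infty)$, and the resulting exponential derivative bounds deliver the holomorphic extension of $W$ to a rounded strip about the positive real axis.

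The hardest step is the uniform propagation in the second paragraph: without reflectionlessness, one cannot argue that $t \cdot W$ lies in some shift-invariant class like $\mathcal{D}(U)$ and thereby inherit Hypothesis \ref{H1.1} automatically; the uniform estimate must come instead from a direct, slightly delicate, Riccati analysis near $z = \infty$.
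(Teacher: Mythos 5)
Your high-level plan matches the paper's (renormalize to $W_{11}\equiv 0$, establish a uniform-in-$t$ version of Hypothesis~\ref{H1.1} for the shifted potentials, extract coefficient bounds from the Riccati equation, translate back), but the central step is handled differently and the route you sketch has a real gap.

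The paper does not run a Duhamel/bootstrap estimate on the Riccati equation to get the uniform extension. Instead it uses a structural fact: $m_+(t;z)=T(t;z)\cdot m_+(0;z)$, and for $z\in\C^-\cup\R$ the transfer matrix $T(t;z)$, as a linear fractional transformation, maps $\C^+$ into $\C^+$. Combined with the given continuation $M$ of $m_+(0;\cdot)$, this shows $m_+(t;\cdot)$ continues holomorphically to $\{|z|>1/r\}$ with values in $\C^+$, and the removable singularity at $\infty$ finishes the job --- with the \emph{same} $r$, for every $t$, with no a priori information on $W$. Your proposed Riccati bootstrap faces a circularity: the Duhamel term $\int_0^t e^{-2iz(t-s)}\,2(p-iq)(s)\,ds$ cannot be shown to stay $O(1)$ uniformly in $t$ without bounds on $p,q$, which are precisely what you are trying to prove. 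Nothing in $p,q\in L^1_{\mathrm{loc}}$ forces this integral to stay bounded as $t\to\infty$, and for $z$ not on the real axis the factor $e^{-2iz(t-s)}$ is not a rotation and can grow. You flag this as "the hardest step," and it is; the transfer-matrix observation is the missing idea that resolves it cleanly.

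Two further points. First, your "$|f_n(t)|\le CR^n$ by Cauchy estimates" needs a uniform sup bound on $M_t$ that you have not produced; the paper instead keeps the Herglotz structure ($M_t:D_r\to\overline{\C^+}$, $M_t(0)=i$) and applies Lemma~\ref{L6.1}, which is proved using the Herglotz representation, not a raw Cauchy estimate. Second, the passage from the uniform coefficient bounds $|g_n(t)|\le (3/r)^n$ to the derivative bounds is nontrivial: the paper's Lemma~\ref{L4.2} sets up a majorant power series in two variables and invokes Cauchy--Kovalevskaya to obtain $|g_1^{(N)}(t)|\le CM^N N!$. Your statement "$|q^{(k)}(t)|\le CA^k$" omits the $k!$, which as written would force $q$ to be entire of exponential type and cannot be correct; and even with the $k!$ restored, the inductive bookkeeping behind the recursion is exactly the content of Lemma~\ref{L4.2} and should not be waved off as a direct propagation.
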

The proof will also show that we have bounds $B_n$ that depend only on $r>0$ from Hypothesis \ref{H1.1},
not on $W$ itself.

We prepare for the proof with general bounds on the Taylor coefficients of certain holomorphic functions, which will be applied to $m(z)$ later.
Recall our notation $D_r=\{ z: |z|<r\}$.
\begin{Lemma}
\label{L6.1}
For any holomorphic function $f: D_r\to\C^+$, $f(0)=i$, we have
\[
\left| f^{(N)}(0) \right| \le (3/r)^N N! .
\]
\end{Lemma}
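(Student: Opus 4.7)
The plan is to reduce to Schwarz's lemma via the Cayley transform and then read off the coefficient bound from a Cauchy estimate on a well-chosen circle. First I would compose $f$ with the Möbius map $\phi(w) = (w-i)/(w+i)$, which biholomorphically sends $\C^+$ to the open unit disk and takes $i$ to $0$. The resulting function $g = \phi \circ f$ is holomorphic from $D_r$ into the unit disk with $g(0) = 0$, so Schwarz's lemma, applied to $z \mapsto g(rz)$, gives the pointwise bound $|g(z)| \le |z|/r$ throughout $D_r$.

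Next, inverting the Cayley transform yields $f(z) = i(1 + g(z))/(1 - g(z))$, hence
\[
|f(z) - i| \;=\; \frac{2|g(z)|}{|1 - g(z)|} \;\le\; \frac{2|z|}{r - |z|}
\]
for $z \in D_r$. Since $f - i$ vanishes at the origin, a Cauchy estimate applied to $f - i$ on the circle $|z| = \rho$ for arbitrary $0 < \rho < r$ gives, for every $N \ge 1$,
\[
\frac{|f^{(N)}(0)|}{N!} \;\le\; \frac{\max_{|z| = \rho}|f(z) - i|}{\rho^N} \;\le\; \frac{2}{(r - \rho)\,\rho^{N - 1}} .
\]

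It then only remains to tune $\rho$. The choice $\rho = 2r/3$ collapses the right-hand side to $(3/r)^N \cdot 2^{2 - N}$, which is already $\le (3/r)^N$ for every $N \ge 2$. The two remaining cases are routine: for $N = 0$ the bound reduces to $|f(0)| = 1$; for $N = 1$, differentiating $f = i(1 + g)/(1 - g)$ at $z = 0$ gives $f'(0) = 2ig'(0)$, so the derivative form of Schwarz's lemma $|g'(0)| \le 1/r$ yields $|f'(0)| \le 2/r \le 3/r$. There is no real obstacle here; the only small point that requires care is selecting $\rho$ so that the specific constant $3$ in the statement emerges, and the value $\rho = 2r/3$ is precisely what makes $(r - \rho)\,\rho^{N - 1}$ fit the target bound $(r/3)^N$ up to a factor that is harmless for $N \ge 2$.
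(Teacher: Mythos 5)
Your proof is correct, and it takes a genuinely different route from the paper's. The paper reduces to $r=1$, pushes $f$ forward to a self-map $F = f\circ\psi^{-1}$ of $\C^+$ via $\psi(z)=i(1+z)/(1-z)$, invokes the Herglotz coefficient bound $|a_n|\le 1$ from its Lemma~4.1 for the expansion of $F$ about $i$, and then explicitly substitutes $\psi(z)-i=2iz/(1-z)$ and collects coefficients via the negative binomial series, arriving at the sharper bound $|b_N|\le 2\cdot 3^{N-1}$. You instead pull $f$ back to a self-map $g=\phi\circ f$ of the unit disk via the Cayley transform, apply Schwarz's lemma to get the pointwise bound $|g(z)|\le|z|/r$, convert to $|f(z)-i|\le 2|z|/(r-|z|)$, and then run a Cauchy estimate with the tuned radius $\rho=2r/3$, cleaning up $N=0,1$ separately. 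Both proofs exploit the same conformal equivalence $\C^+\cong D_1$, but where the paper leans on the Herglotz representation machinery it has already set up (Lemma~4.1) plus a combinatorial binomial identity, your argument is self-contained and uses only Schwarz's lemma and Cauchy estimates; the trade-off is that your uniform $N\ge 2$ bound $(3/r)^N\cdot 2^{2-N}$ slightly overshoots the target at $N=2$ (equality) and requires the separate cleanup at $N=1$, whereas the paper's explicit formula yields $2\cdot 3^{N-1}$ directly for all $N\ge 1$. Neither constant is optimal, and both establish the stated $(3/r)^N N!$ bound with uniformity in $f$, which is the point.
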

Of course, these bounds are not sharp asymptotically for a given $f$ when $N\to\infty$; even without using the fact that $f$ maps to $\C^+$,
just from the lower bound $r$ on the radius of convergence, we already obtain $|f^{(N)}(0)|\le C(q;f) q^NN!$ for any $q>1/r$.
However, and this is the point here, the bounds of Lemma \ref{L6.1} are uniform in $f$.
\begin{proof}
It suffices to prove this for $r=1$ since we can apply this version to $g(z)=f(rz)$ to obtain the general case. We use the conformal map
\[
\psi : D_1\to\C^+, \quad \psi(z) = i\, \frac{1+z}{1-z}
\]
to represent $f=F\circ \psi$, and here $F=f\circ\psi^{-1}$ now maps $F:\C^+\to\C^+$, $F(i)=i$. We thus know from Lemma \ref{L4.1} and its proof that
the Taylor coefficients of $F(w)-i =\sum_{n\ge 1} a_n(w-i)^n$ satisfy $|a_n|\le 1$. We have
\[
\psi(z)-i = \frac{2iz}{1-z} ,
\]
so
\begin{equation}
\label{6.1}
f(z)-i = \sum_{n\ge 1} a_n \left( \frac{2iz}{1-z} \right)^n ,\quad |z|<1/3 .
\end{equation}
We can find the Taylor coefficients of $(1-z)^{-n}$ explicitly, for example by starting with the geometric series and taking derivatives. We have
\[
\left( \frac{1}{1-z} \right)^n = \sum_{k\ge 0} \binom{k+n-1}{n-1} z^k , \quad |z|<1 .
\]
Plugging this back into \eqref{6.1} and collecting the terms contributing to $z^N$, we find that $f(z)-i =\sum_{N\ge 1} b_Nz^N$, with
\[
b_N = \sum_{n=1}^N (2i)^n a_n \binom{N-1}{n-1} .
\]
In particular,
\[
|b_N| \le \sum_{n=1}^N 2^n \binom{N-1}{n-1} = 2\sum_{n=0}^{N-1} \binom{N-1}{n} 2^n = 2\cdot 3^{N-1} ,
\]
by the binomial theorem in the last step. Since $b_N = f^{(N)}(0)/N!$, this implies the desired bound.
\end{proof}
\begin{proof}[Proof of Theorem \ref{T6.1}.]
As above, the argument proceeds by an analysis of the Riccati equation that is satisfied by
$m_+(t;z)\equiv m_+(z; t\cdot W)$, with $(t\cdot W)(x)=W(t+x)$. Here we will again work with the $W_{11}\equiv 0$
normalization from Proposition \ref{P2.1}. However, instead of $z=\varphi(i+h)$, we can now use the much simpler
map $z=-1/w$. We again consider
\[
g(t;w)=m_+(t;-1/w)-i .
\]
Then, as in the proof of Theorem \ref{T4.1} (compare \eqref{4.5}), we have
\begin{equation}
\label{6.5}
\frac{dg}{dt} = \frac{1}{w}g^2 -2qg+\frac{2i}{w}g + 2(p-iq) .
\end{equation}
At this point, for general $t>0$,
we can only be really certain about this for $w\in\C^+$ so that $g$ is defined in terms of the original $m$ function, with no analytic continuation necessary.
However, and this is a key point, the shifted $m$ functions do satisfy Hypothesis \ref{H1.1} as well, for the same $r>0$.

To see this, recall that $m_+(t;z) = T(t;z)\cdot m_+(0;z)$, with
$T$ denoting the (full, with $z$ restored) transfer matrix. In other words, $T$ is the matrix solution of \eqref{Dirac} with the initial value $T(0;z)=1$.
By assumption, $m_+(0;z)$ has a holomorphic continuation
\[
M: \{ z\in\C_{\infty}: |z|>1/r \}\to\C^+ .
\]
Thus $T(t;z)\cdot M(z)$, at this point viewed as a function taking values in $\C_{\infty}$, is a holomorphic continuation
of $m_+(t;z)$ to the punctured neighborhood $\{z\in\C : |z|>1/r\}$ of $\infty$. For fixed $z\in\C^-\cup\R$, the transfer matrix $T(t;z)$, acting as a
linear fractional transformation $w\mapsto T(t;z)\cdot w$, maps $\C^+$ back to $\C^+$. This follows by combining \cite[Theorem 1.2]{Rembook}, or rather
the version of this result for $z\in\C^-\cup\R$ and Dirac equations (which has the same proof), with \cite[Lemma 3.9]{Rembook}.
So $T(t;z)\cdot M(z)\in\C^+$ for $z\in\C^-\cup\R$, $|z|>1/r$. This is also
trivially true for $z\in\C^+$ because then this function is simply the original $m$ function $m_+(t;z)$.

So we now have a holomorphic continuation of $m_+(t;z)$ to $|z|>1/r$ that maps to $\C^+$. In this situation, the isolated singularity at $z=\infty$
is removable. We have verified that $m_+(t;z)$ satisfies Hypothesis \ref{H1.1} for all $t\ge 0$, with a single uniform $r>0$.

In particular, for general $t\ge 0$, we may expand
\begin{equation}
\label{6.2}
g(t;w) = \sum_{n\ge 1} g_n(t) w^n , \quad |w|<r .
\end{equation}
Moreover, Lemma \ref{L6.1} applies to $w\mapsto g(t;w)+i$ for any fixed $t$ and shows that
\begin{equation}
\label{6.3}
|g_n(t)|\le (3/r)^n .
\end{equation}

As in the proof of Theorem \ref{T4.1}, we now plug \eqref{6.2} into \eqref{6.5}, and our plan is to compare coefficients to extract more information. We obtain
\begin{align}
\label{6.4}
\sum_{n\ge 1} g'_n(t)w^n =& \sum_{m,n\ge 1} g_m(t)g_n(t)w^{m+n-1} -2q(t)\sum_{n\ge 1} g_n(t)w^n\\ \nonumber
& +2i \sum_{n\ge 1}g_n(t)w^{n-1} + 2(p(t)-iq(t)) .
\end{align}
Of course, this purely formal calculation is open to all the same criticisms as before. To establish \eqref{6.4} rigorously, we really have to temporarily work with
the integrated version of \eqref{6.5}.
This will show that \eqref{6.4} is valid and $g_n\in C^{\infty}([0,\infty))$. We again refer the reader to \cite{HMcBR} for the details of these arguments.

By \eqref{6.3}, all series converge on $w\in D_{r/3}$ at least, for all $t\ge 0$. So we may now again compare coefficients. Starting with $w^0$, we find
that $0= 2ig_1 + 2(p-iq)$ or
\begin{equation}
\label{6.8}
g_1(t) = q(t)+ip(t) .
\end{equation}
Then, for general $n\ge 1$, we have the following simpler version of \eqref{4.12}:
\begin{equation}
\label{6.18}
g'_n = \sum_{j=1}^n g_jg_{n+1-j} -2qg_n +2ig_{n+1} .
\end{equation}
This will lead to the following bounds.
\begin{Lemma}
\label{L4.2}
There are $C,M>0$ such that
\[
\left| \frac{d^Ng_1(t)}{dt^N} \right| \le CM^N N!
\]
for all $N\ge 0$, $t\ge 0$.
\end{Lemma}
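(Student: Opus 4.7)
My plan is to collapse the infinite coupled family of pointwise bounds on $b_{n,N}(t) := |g_n^{(N)}(t)|$ into a single scalar recurrence by summing against a geometric weight in $n$, and then to analyse this recurrence via the Riccati ODE satisfied by the associated exponential generating function in $N$. The Cauchy-product structure of the right-hand side of \eqref{6.18} is precisely what enables this reduction.

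Set $\rho = 3/r$; by Lemma \ref{L6.1}, $b_{n,0}(t) \le \rho^n$ uniformly in $t$. Fix $\sigma \in (0, 1/\rho)$ (I will later take, say, $\sigma = 1/(3\rho)$) and put
\[
U_N(t) = \sum_{n \ge 1} b_{n,N}(t)\,\sigma^n, \qquad \tilde c_N = \sup_{t \ge 0} U_N(t),
\]
so that $\tilde c_0 \le \rho\sigma/(1 - \rho\sigma) < \infty$; inductively each $\tilde c_N$ is finite via the polynomial-in-$n$ bounds on $b_{n,N}(t)$ that \eqref{6.18} gives step by step. Differentiating \eqref{6.18} $N$ times via Leibniz, taking absolute values (with $|q^{(l)}| \le b_{1,l}$ since $q = \Re g_1$), multiplying by $\sigma^n$ and summing over $n \ge 1$, the key move is to reindex the convolution by $m = n+1-j$: this turns $\sum_{n \ge 1}\sum_{j=1}^n b_{j,l}\,b_{n+1-j,N-l}\,\sigma^n$ into $\sigma^{-1} U_l(t)\,U_{N-l}(t)$. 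The shift term $2\sum_n b_{n+1,N}\sigma^n$ becomes $2\sigma^{-1}(U_N - \sigma b_{1,N}) \le 2\sigma^{-1} U_N$, and the $q$-term is controlled by $b_{1,l} \le U_l/\sigma$. Taking suprema in $t$ produces the scalar recurrence
\[
\tilde c_{N+1} \le \frac{3}{\sigma}\sum_{l=0}^N \binom{N}{l}\,\tilde c_l\,\tilde c_{N-l} + \frac{2}{\sigma}\,\tilde c_N.
\]

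Replacing the inequalities by equalities defines a majorant sequence $\tilde d_N \ge \tilde c_N$, and its exponential generating function $D(t) = \sum_{N \ge 0} \tilde d_N\, t^N/N!$ satisfies the Riccati ODE $D'(t) = (3/\sigma)\,D(t)^2 + (2/\sigma)\,D(t)$ with $D(0) = \tilde c_0$. The substitution $u = 1/D$ linearises this to $u' + (2/\sigma) u = -3/\sigma$, from which one reads off that $D$ is meromorphic with nearest pole at $T^* = (\sigma/2)\log\bigl(1 + 2/(3\tilde c_0)\bigr) > 0$; for the concrete choice $\sigma = 1/(3\rho)$, $T^*$ is a positive constant times $1/\rho$ depending only on $r$. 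Cauchy's estimates for the analytic $D$ on $|t| < T := T^*/2$ then give $\tilde d_N \le K\,N!\,T^{-N}$ for some $K$, and since $\sigma\,b_{1,N}(t) \le U_N(t) \le \tilde c_N \le \tilde d_N$, we obtain $|g_1^{(N)}(t)| \le (K/\sigma)\,M^N\,N!$ with $M = 1/T$, the asserted bound. The only really delicate step is the Cauchy-product bookkeeping that produces the scalar recurrence; once that is in hand, the majorant/Riccati analysis is a classical application of the method of majorants.
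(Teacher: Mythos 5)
Your argument is correct, and it reaches the conclusion by a genuinely different route than the paper's. The paper keeps both indices alive: it majorizes $|g_n^{(N)}(t)|/N!$ by a two-index array $B_n(N)$ satisfying the recursion \eqref{6.9}, then recognises $B_n(N)$ as the power-series coefficients of the solution $Y(w,t)$ of the Cauchy problem \eqref{4.84}, and invokes Cauchy--Kovalevskaya to get a positive radius of convergence and hence $B_1(N)\le CM^N$. You instead eliminate the $n$-index at the outset by summing $b_{n,N}(t)\sigma^n$ with a fixed geometric weight $\sigma<1/\rho$: the Cauchy product over $j$ collapses to $\sigma^{-1}U_l U_{N-l}$, the shift $g_{n+1}$ to a harmless inequality $\le 2\sigma^{-1}U_N$, and the $q^{(d)}$-term is absorbed via $|q^{(d)}|\le b_{1,d}\le U_d/\sigma$. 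This leaves a scalar binomial recurrence in $N$ alone whose exponential generating function $D$ satisfies the explicitly integrable Riccati ODE $D'=(3/\sigma)D^2+(2/\sigma)D$, and the radius $T^*$ (hence $C,M$ and their dependence on $r$ only) is read off in closed form from the logarithm. What this buys: your argument is entirely elementary and self-contained, replacing the appeal to Cauchy--Kovalevskaya by an explicit $1/D$-linearisation and Cauchy estimates, and it sidesteps the somewhat ad hoc factor-of-$n$ insertion the paper makes when passing from the $A_n(N)$ recursion to \eqref{6.9} precisely so that the shift term can be matched to $\partial_w Y$. Both approaches in fact bound all $g_n^{(N)}$ simultaneously (in your setup, $|g_n^{(N)}(t)|\le\sigma^{-n}\tilde d_N$), so nothing is lost by the collapse; the paper's PDE viewpoint is conceptually symmetric in the two indices, but for the statement of Lemma \ref{L4.2} your reduction is the more economical one.
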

Let us postpone the slightly convoluted proof of Lemma \ref{L4.2}. We first explain how we can use these bounds to finish the proof of Theorem \ref{T6.1}.
Using Taylor's theorem with the Lagrange formula for the remainder, we deduce from Lemma \ref{L4.2} that $g_1(t)$ is represented by its power series about any $t_0>0$
for $t\in (t_0-1/M, t_0+1/M)$, $t\ge 0$. This power series then also provides a holomorphic continuation of $g_1$ to the disk
$\{ z\in\C : |z-t_0|<1/M\}$. When the disks overlap, these functions are holomorphic continuations of each other, and since the set
$S$ from Theorem \ref{T6.1} is simply connected, we obtain a holomorphic continuation of $g_1(t) =q(t)+ip(t)$ to all of $S$.

Since $p(t),q(t)$ are real valued, we have $q(t) = (g_1(t)+g^{\#}_1(t))/2$, $p(t) = (g_1(t)-g^{\#}_1(t))/(2i)$ for $t\ge 0$,
and the functions on the right-hand sides are also holomorphic on $S$. Here, we employ the usual notation $f^{\#}(z)=\overline{f(\overline{z})}$.
We have shown that $p,q$ themselves have holomorphic extensions to $S$. This is still not exactly what Theorem \ref{T6.1} claims because
of the discrepancy in normalizations. We must return to \eqref{4.11} to extract the entries $a,b$ of the original $\tr W=0$ version
as the matrix elements of $\alpha\cdot W$. A calculation shows that for example
\[
a(x) = p(x)\cos 2\alpha(x) - q(x) \sin 2\alpha(x) ,
\]
with $\alpha$ still given by $\alpha(x) = \int_0^x p(t)\, dt$. It follows that $a$ also has a holomorphic continuation to $S$, as desired.
Here, we can obtain the continuation of $\alpha$ by using the same formula for $\alpha(z)$ and interpreting the integral as a complex line integral.

Finally, Lemma \ref{L4.2} also provides the asserted bounds on $|q^{(n)}(t)+ip^{(n)}(t)|=|g^{(n)}_1(t)|$, and then on $a^{(n)}$, $b^{(n)}$ as well.

To prove that these bounds can be chosen to depend on $r>0$ from Hypothesis \ref{H1.1} only, as claimed in the comment following the statement of Theorem \ref{T6.1},
the reader would have to check that the upcoming proof of Lemma \ref{L4.2} produces constants $C,M$ that only depend on $r>0$.
We will not address this issue explicitly, but it is easy
to extract this extra information from the proof we are about to give.
\end{proof}
\begin{proof}[Proof of Lemma \ref{L4.2}.]
We differentiate \eqref{6.18} $N$ times, using the general product rule $(uv)^{(N)}=\sum_{d=0}^N \binom{N}{d}u^{(d)}v^{(N-d)}$.
It will be convenient to state the result of this calculation in terms of $h_n(N;t)=g^{(N)}_n(t)/N!$. We obtain
\begin{align}
\label{6.7}
(N+1)h_n(N+1;t) =& \sum_{d=0}^N\sum_{j=1}^n h_j(d;t)h_{n+1-j}(N-d;t)  \\ \nonumber
& -2\sum_{d=0}^N \frac{q^{(d)}(t)}{d!} h_n(N-d;t) + 2i h_{n+1}(N;t) .
\end{align}
We only want bounds, so we may modify these equations. We break the procedure into two easy small steps.

First define constants $A_n(N)>0$ for $n\ge 1$, $N\ge 0$ recursively by $A_n(0)=(3/r)^n$ and then,
for $N\ge 0$,
\begin{align*}
(N+1)A_n(N+1) = & \sum_{d=0}^N\sum_{j=1}^n A_j(d)A_{n+1-j}(N-d) \\
& +2\sum_{d=0}^N A_1(d) A_n(N-d) + 2A_{n+1}(N) .
\end{align*}
Then $|h_n(N;t)|\le A_n(N)$ for all $n\ge 1$, $N\ge 0$, $t\ge 0$.

We can prove this claim by a straightforward induction on $N$. The case $N=0$ is covered \eqref{6.3}. In the inductive step, we estimate
the right-hand side of \eqref{6.7} in the obvious way. To bound the first term in the second line, we use \eqref{6.8}, which shows that
$|q^{(d)}(t)/d!|\le |h_1(d;t)|$. So if we assume the induction hypothesis, this observation lets us estimate $|q^{(d)}(t)/d!|\le A_1(d)$.

We can further improve this recursion by defining $B_n(0)=(3/r)^n$ and then recursively
\begin{equation}
\label{6.9}
(N+1)B_n(N+1) =  3\sum_{d=0}^N\sum_{j=1}^n B_j(d)B_{n+1-j}(N-d)  + 2nB_{n+1}(N) .
\end{equation}
Then $B_n(N)\ge A_n(N)$ for all $n,N$, as another induction on $N$ will confirm.

Now consider the following Cauchy problem for a function $Y(w,t)$ of two variables near $(w,t)=(0,0)$:
\begin{equation}
\label{4.84}
\frac{\partial Y}{\partial t} = 3Y^2 + 2\frac{\partial Y}{\partial w} ,\quad\quad Y(w,0) = \frac{3}{r-3w} .
\end{equation}
This has a real analytic solution in a neighborhood of $(w,t)=(0,0)$, as we can confirm by solving \eqref{4.84} explicitly, or by (unnecessarily, but more conveniently)
referring to the Cauchy-Kovalevskaya theorem \cite[Theorem 4.6.2]{Evans}. So there are coefficients $C_n(N)$ and a $\rho>0$ such that
\begin{equation}
\label{4.86}
Y(w,t) = \sum_{n\ge 1, N\ge 0} C_n(N) w^{n-1}t^N , \quad |w|, |t| < \rho .
\end{equation}
Since $Y(w,0)=3/(r-3w)$, we have $C_n(0)=(3/r)^n$. Moreover, by plugging \eqref{4.86} back into \eqref{4.84} and comparing the coefficients
of $w^{n-1}t^N$, we see that the $C_n(N)$ also satisfy the
recursion \eqref{6.9}. Hence $C_n(N)=B_n(N)$ for all $n\ge 1$, $N\ge 0$.

We can now specialize to $n=1$. The $B_1(N)$ have been recognized as the coefficients of a power series with positive radius of convergence,
hence $B_1(N)\le CM^N$ for suitable $C,M>0$. This gives the bounds on $g^{(N)}_1(t)$ that were stated in Lemma \ref{L4.2}.
\end{proof}
\section{General canonical systems}
It is natural to ask how exactly the Dirac operators $\mathcal D(U)$ sit inside the more general canonical systems $\mathcal R(U)$, and one simple
but satisfactory answer is provided by the natural group action of $\PSL (2,\R)=\SL(2,\R)/\{\pm 1\}$ on $\mathcal C(\R)$. We have used this action extensively
already for the more specialized maps $R_{\theta}$, and we can define it in the same way in general. Recall that any $A\in\PSL(2,\R)$ acts on $\C_{\infty}$ as a linear
fractional transformation, as spelled out in \eqref{2.57},
and then also on canonical systems $H\in\mathcal C(\R)$ by acting pointwise in this way on the $m$ functions:
\begin{equation}
\label{5.1}
\pm m_{\pm}(z; A\cdot H) = A\cdot (\pm m_{\pm}(z; H)) .
\end{equation}
Then $A\cdot H$ is unitarily equivalent to $H$, and the property of being reflectionless on a set is preserved \cite[Theorems 7.2, 7.9(a)]{Rembook}.
In particular, the spaces $\mathcal R_0(U)$, $\mathcal R(U)$ are invariant under the action.
Recall also that these spaces (for any Borel set $U\subseteq\R$) always contain the trivial canonical systems $H\equiv P_{\alpha}$. Their $m$ functions
$m_{\pm}(z; P_{\alpha}) \equiv \mp\tan\alpha$ are constant, with value in $\R_{\infty}$. Recall the abbreviation $\mathcal Z$ from \eqref{1.93}
that we use for the collection of these systems.
Clearly, $\mathcal Z$ is also invariant under the action of $\PSL(2,\R)$.

As we saw much earlier, the subgroup $\operatorname{PSO}(2)$
preserves $\mathcal D_0(U)$, $\mathcal D(U)$. So this part of $\PSL(2,\R)$ is useless for our current purposes, and we focus on the dilation/translation subgroup
\[
G = \left\{ \begin{pmatrix} c & a/c \\ 0 & 1/c \end{pmatrix} : a\in\R, c>0 \right\}
\]
instead. Then $m_+(z; g\cdot H) = c^2m_+(z;H)+a$. If we also recall that $m_+$ is holomorphic at $z=\infty$ if $H\in\mathcal R(U)$ and
$H\in\mathcal D(U)$ if and only if $m_+(\infty)=i$, by Theorem \ref{T3.2}, then the following result is now obvious.
\begin{Theorem}
\label{T5.1}
Let $H\in\mathcal R(U)$. Then either $H\in\mathcal Z$, or the orbit $\{g\cdot H: g\in G\}$ contains a unique
$H_1=g\cdot H\in\mathcal D(U)$; in this latter case, $g\in G$ is also unique.

If, in addition, $H\in\mathcal R_0(U)\setminus\mathcal Z$, then $g\cdot H\in\mathcal D_0(U)$ also.
\end{Theorem}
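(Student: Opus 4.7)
The strategy is to reduce the theorem to a direct computation with the transformation law $m_+(z;g\cdot H)=c^2 m_+(z;H)+a$, combined with the characterization of $\mathcal D(U)$ from Theorem \ref{T3.2}. The key preliminary observation is that under Hypothesis \ref{H1.1}, which holds for every $H\in\mathcal R(U)$ by Lemma \ref{L1.2}, the value $m_+(\infty;H)$ makes sense and lies in $\overline{\C^+}$.

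First I would handle the case $H\notin\mathcal Z$. The excerpt already notes (right after \eqref{1.93}) that if $H$ is reflectionless on a positive measure set and $H\notin\mathcal Z$, then $m_+$ is a genuine Herglotz function, that is, $m_+:\C^+\to\C^+$. Applying this to the holomorphic extension $M$ from Hypothesis \ref{H1.1} (equivalently, working with $w\mapsto M(-1/w)$ near $w=0$) together with the open mapping theorem rules out $M(\infty)\in\R$: otherwise Schwarz reflection would force $m_+$ to be constant, contradicting $H\notin\mathcal Z$. Hence
\[
m_+(\infty;H)=a_0+ib_0,\qquad b_0>0.
\]

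Next I would solve for the unique $g\in G$. For $g=\bigl(\begin{smallmatrix}c & a/c\\ 0 & 1/c\end{smallmatrix}\bigr)\in G$ the transformation rule gives $m_+(\infty;g\cdot H)=c^2(a_0+ib_0)+a$. By Theorem \ref{T3.2} and $\varphi(i)=\infty$, $g\cdot H\in\mathcal D(U)$ if and only if $m_+(\infty;g\cdot H)=i$. Matching real and imaginary parts,
\[
c^2 b_0=1,\qquad c^2 a_0+a=0,
\]
so $c=1/\sqrt{b_0}$ and $a=-a_0/b_0$ are uniquely determined. This produces exactly one $H_1=g\cdot H\in\mathcal D(U)$ in the orbit, and shows $g$ is unique.

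For the final statement, if in addition $H\in\mathcal R_0(U)\setminus\mathcal Z$, note that the $\PSL(2,\R)$-action yields unitarily equivalent operators (cited just before the theorem, via \cite[Theorems 7.2, 7.9(a)]{Rembook}), so $\sigma(g\cdot H)=\sigma(H)\subseteq\overline{U}$ and therefore $g\cdot H\in\mathcal R_0(U)$. Combining this with the previous paragraph yields $g\cdot H\in\mathcal D(U)\cap\mathcal R_0(U)=\mathcal D_0(U)$. The only subtle point in the entire argument is the strict positivity of $b_0$; everything else is a direct verification, which is presumably why the theorem is described in the text as \emph{now obvious}.
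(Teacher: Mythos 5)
Your proposal is correct and is essentially the argument the paper has in mind: the theorem is declared ``now obvious'' right after the three facts you invoke ($m_+(z;g\cdot H)=c^2m_+(z;H)+a$, holomorphy of $m_+$ at $\infty$, and the characterization $H\in\mathcal D(U)\iff m_+(\infty)=i$ from Theorem~\ref{T3.2}), and you are filling in precisely the implied details, including the one genuinely non-trivial point that $\Im m_+(\infty)>0$ when $H\notin\mathcal Z$. One small omission: after ruling out $M(\infty)\in\R$, you implicitly assume $m_+(\infty)$ is finite, but a priori $M$ maps into $\overline{\C^+}=\C^+\cup\R\cup\{\infty\}$, so you should also dispose of $M(\infty)=\infty$; the same open-mapping argument applied to $-1/M$ shows this forces $M\equiv\infty$, hence $m_+\equiv\infty$, hence $H\equiv P_{\pi/2}\in\mathcal Z$, so the case is already excluded.
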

Please see also \cite{FR} for much more on this general theme of the $\PSL(2,\R)$ action on spaces of reflectionless canonical systems.
\begin{Corollary}
\label{C5.1}
Let $H\in\mathcal R(U)$. Then either $H\in\mathcal Z$, or $\det H(x)>0$ for all $x\in\R$. In this second case, we may normalize
$H$ by demanding that $\det H(x)=1$. The entries of this version of $H(x)$ are real analytic functions of $x\in\R$.
\end{Corollary}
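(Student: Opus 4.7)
The plan is to use Theorem \ref{T5.1} to reduce everything to the Dirac setting. If $H\in\mathcal Z$, we are already in the first alternative of the corollary, so assume $H\notin\mathcal Z$. Theorem \ref{T5.1} then produces a unique $g\in G$ with $H_1:=g\cdot H\in\mathcal D(U)$, and by Definition \ref{D2.1} this representative satisfies $\det H_1(x)\equiv 1$ and $H_1\in AC(\R)$.

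Next I would make explicit the pointwise realization of the $\PSL(2,\R)$ action. For a lift $A\in\SL(2,\R)$ of $g$, the formula is $(g\cdot H)(x)=A^{-t}H(x)A^{-1}$; this is the general version of the rotation formula $H_\theta(x)=R_\theta H(x)R^t_\theta$ used in Section 2, and it follows from the symplectic identity $A^tJA=J$, which in dimension two is automatic on $\SL(2,\R)$. Inverting the relation singles out the representative $H(x)=A^tH_1(x)A$, and since $\det A=1$ we obtain $\det H(x)=\det H_1(x)=1$ pointwise. This delivers both $\det H>0$ and the asserted normalization.

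For real analyticity I would transfer properties of $H_1$ back to $H$ through the Dirac picture. By Theorem \ref{T2.1} (applied to both half-lines) $H_1$ is associated with a Dirac potential $W\in\mathcal W(\R)$, $\tr W=0$; since $H_1\in\mathcal R(U)$, Lemma \ref{L1.2} supplies Hypothesis \ref{H1.1} for $m_+(\,\cdot\,;W)$, and the analogous statement holds for the left half-line. Theorem \ref{T6.1} then furnishes real analytic extensions of $W$ to rounded strips covering both halves of $\R$, hence a real analytic $W$ on all of $\R$. The transfer matrix $T$ solving $JT'+WT=0$, $T(0)=1$, is then real analytic on $\R$ as the solution of an ODE with analytic coefficients and analytic data, so $H_1=T^tT$ is real analytic; finally $H(x)=A^tH_1(x)A$ is its image under a constant linear map, which preserves real analyticity.

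The main point requiring care is the bookkeeping in the middle paragraph: identifying the correct pointwise formula for the general $\PSL(2,\R)$ action, and verifying that the representative $H(x)=A^tH_1(x)A$ obtained by pulling back the determinant-normalized $H_1\in\mathcal D(U)$ is genuinely a representative of the original equivalence class of $H$. Once this is pinned down, both conclusions follow from a clean assembly of Theorems \ref{T5.1} and \ref{T6.1} together with basic ODE regularity.
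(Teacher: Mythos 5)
Your proof is correct and follows essentially the same route as the paper: decompose $H(x)=A^tH_d(x)A$ via Theorem \ref{T5.1}, use $\det A=1$ to get $\det H\equiv 1$, and obtain analyticity of $H_d=T^tT$ from the analyticity of the Dirac potential $W$ (Theorem \ref{T4.1}, resting on Theorem \ref{T6.1}) together with holomorphy of solutions of the ODE $JT'+WT=0$. The only cosmetic difference is that the paper simply cites the (already established, whole-line) analyticity of $W$ from Theorem \ref{T4.1}, whereas you re-derive it directly from Lemma \ref{L1.2} and Theorem \ref{T6.1} applied to each half-line — a slightly more verbose but equally valid packaging.
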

\begin{proof}
Recall that the group action \eqref{5.1} has the following effect on the coefficient functions \cite[Theorem 3.20]{Rembook}: $(A\cdot H)(x) = A^{-1t}H(x)A^{-1}$.

Now assume that we are not dealing with one of the degenerate systems $H\equiv P_{\alpha}$. By Theorem \ref{T5.1}, $H(x)=B^t H_d(x) B$
for some $B\in\SL(2,\R)$ and some $H_d\in\mathcal D(U)$. Since $\det H_d=1$ was in fact our standard normalization for such $H_d$,
it is clear that $H$ can be given this same property, and the representative $B^tH_dB$ itself is already normalized in this way.

Next, recall that $H_d(x)=T^t(x)T(x)$, with $T$ denoting the solution of $JT'+WT=0$, $T(0)=1$. We know that $W$ is real analytic (in some
normalizations at least, including the $\tr W=0$ version), and holomorphic
differential equations have holomorphic solutions \cite[Theorem 1.8.1]{CL}, hence $T$, $H_d$, and $H$ are real analytic as well.
\end{proof}
Having clarified this, we can now also state the bounds of Theorem \ref{T4.1} in terms of $H$ directly, for general $H\in\mathcal R(U)$.
\begin{Theorem}
\label{T5.2}
Let $H\in\mathcal R(U)\setminus\mathcal Z$, and normalize $H$ as usual by requiring $\det H(x)=1$. Then
\begin{equation}
\label{5.11}
\| H^{-1/2}(x)H'(x)H^{-1/2}(x)\|\le 2
\end{equation}
for all $x\in\R$. Moreover, equality for a single $x_0\in\R$ implies that $H\in\mathcal R_0(U)$.
\end{Theorem}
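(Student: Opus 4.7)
The plan is to reduce everything to the Dirac estimate already proved in Theorem \ref{T4.1}, exploiting the $\PSL(2,\R)$-invariance of the quantity in \eqref{5.11}. By Corollary \ref{C5.1}, any $H\in\mathcal R(U)\setminus\mathcal Z$ in the $\det H=1$ normalization may be written as $H(x)=B^tH_d(x)B$ with $B\in\SL(2,\R)$ and $H_d\in\mathcal D(U)$; since $\det B=1$, the factor $H_d$ is automatically in the $\det H_d=1$ normalization.

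The main observation is that $\|H^{-1/2}H'H^{-1/2}\|$ is an invariant of the $\SL(2,\R)$-action (within the $\det=1$ gauge), so the bound only needs to be checked on the Dirac representative. Indeed, $H^{-1}H'=B^{-t}(H_d^{-1}H_d')B^t$ is similar to $H_d^{-1}H_d'$, and the symmetric matrix $S=H^{-1/2}H'H^{-1/2}$ is in turn similar to $H^{-1}H'$ via conjugation by $H^{1/2}$. Because $\det H\equiv 1$ forces $\tr(H^{-1}H')=(\log\det H)'=0$, all of these traceless $2\times 2$ matrices share the same eigenvalues $\pm\lambda$, and since $S$ is symmetric, $\|S\|=|\lambda|$. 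The same reasoning applies to the Dirac side, giving $\|H^{-1/2}H'H^{-1/2}\|=\|H_d^{-1/2}H_d'H_d^{-1/2}\|$.

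For the Dirac side, write $H_d=T^tT$ where $T$ is the transfer matrix of the $\tr W=0$ normalized potential $W=\left(\begin{smallmatrix}a&b\\b&-a\end{smallmatrix}\right)$. From $JT'+WT=0$ we get $T'=JWT$, hence $H_d'=T^t(JW-WJ)T$. Using the polar decomposition $T=UH_d^{1/2}$ with $U\in SO(2)$ (available since $\det T=1$), I find
\[
H_d^{-1/2}H_d'H_d^{-1/2}=U^t(JW-WJ)U,
\]
whose operator norm equals $\|JW-WJ\|$. A one-line calculation gives $JW-WJ=2\left(\begin{smallmatrix}-b&a\\a&b\end{smallmatrix}\right)$, with norm $2\sqrt{a^2+b^2}=2\|W\|$. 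Combining this with the sharp bound $\|W(x)\|\le 1$ from Theorem \ref{T4.1} produces $\|H^{-1/2}H'H^{-1/2}\|\le 2$.

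For the equality case, $\|H^{-1/2}(x_0)H'(x_0)H^{-1/2}(x_0)\|=2$ forces $\|W(x_0)\|=1$, and Theorem \ref{T4.1} then identifies $W$ with one of the constant potentials $W_\beta$ from \eqref{3.21}. By Theorem \ref{T3.3}, these correspond exactly to $H_d\in\mathcal D_0(U)\subseteq\mathcal R_0(U)$; since $\mathcal R_0(U)$ is invariant under the $\PSL(2,\R)$-action, we conclude $H=B^tH_dB\in\mathcal R_0(U)$. The only nontrivial step in this plan is the invariance argument in the second paragraph — keeping careful track of the two successive similarities (from $H$ to $H_d$ via $B$, and from $H^{-1}H'$ to $S$ via $H^{1/2}$) and verifying that the common spectral data of the traceless symmetric matrices involved is enough to equate the two norms. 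Everything after that is a direct application of results from Sections 2 and 4.
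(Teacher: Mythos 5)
Your argument is correct and follows the same overall plan as the paper: reduce to the Dirac representative $H_d$ via the $\SL(2,\R)$-conjugation, then invoke the sharp bound $\|W\|\le 1$ from Theorem \ref{T4.1}. There is one small but genuine variation. The paper, after establishing that $\|K\|=|\det K|^{1/2}$ for the symmetric traceless matrix $K=H^{-1/2}H'H^{-1/2}$ and that $\det K=\det H'=\det H_d'$, reduces to $x_0=0$ by shift invariance and then computes $\det H_d'(0)$ directly using $T(0)=1$. You instead use the polar decomposition $T(x)=U(x)H_d^{1/2}(x)$ with $U\in SO(2)$ at a general $x$, conjugate $T$ away, and obtain $\|S_d\|=\|JW-WJ\|=2\|W\|$ without needing the shift reduction. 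Both routes cost about the same, and yours is perhaps a bit more transparent about where the invariance comes from. Two minor points: the conjugation formula in your second paragraph should read $H^{-1}H'=B^{-1}(H_d^{-1}H_d')B$ rather than $B^{-t}(\cdot)B^t$ (with $H=B^tH_dB$ one has $H^{-1}=B^{-1}H_d^{-1}B^{-t}$, so the $B^{\pm t}$ factors cancel in the middle); this is a typo and does not affect the similarity conclusion. Also, the underlying eigenvalue fact you use — a real symmetric traceless $2\times2$ matrix has $\|\cdot\|=|\lambda|=|\det|^{1/2}$ — is exactly what the paper exploits via the determinant, so the two invariance arguments are really one and the same, just packaged differently.
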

\begin{proof}
Since $\mathcal R_0(U), \mathcal R(U),\mathcal Z$ are invariant under shifts, it suffices to discuss the case $x=x_0=0$. As above, we have
$H(x)=A^tH_d(x)A$ for some $A\in\SL (2,\R)$, with $H_d=T^tT\in\mathcal D(U)$. We compute $H'_d(0)=T'^t(0)+T'(0)$.
To evaluate this, write $W=\left( \begin{smallmatrix} a & b \\ b & -a \end{smallmatrix}\right)$. Since $T'(0)=JW(0)$, we obtain
\begin{equation}
\label{5.12}
H'_d(0) = 2\begin{pmatrix} -b(0) & a(0) \\ a(0) & b(0) \end{pmatrix} .
\end{equation}

Consider now the matrix from \eqref{5.11}; call it $K(x)$.
Since $\det H(x)\equiv 1$, we have $\tr H'H^{-1}=0$, so $\tr K(x)=0$ as well. It follows that the eigenvalues of $K$ are $\pm (-\det K)^{1/2}$.
Since $K$ is symmetric, this implies that $\|K\| = |\det K|^{1/2}$. Moreover, $\det K = \det H' = \det H'_d$,
and at $x=0$ this equals $-4(a^2(0)+b^2(0))$, by \eqref{5.12}.
By Theorem \ref{T4.1}, $a^2+b^2\le 1$, with equality precisely when $H_d\in\mathcal D_0(U)$, and this holds if and only if
$H\in\mathcal R_0(U)$.
\end{proof}
Note that it would not do to try to run this argument in the original setting of Theorem \ref{T4.1} because a shift $t\cdot H$ of an $H\in\mathcal D(U)$
will typically no longer satisfy $(t\cdot H)(0)=H(t)=1$, and thus we have left $\mathcal D(U)$. We really need the larger space $\mathcal R(U)$.
The issues discussed in Theorem \ref{T2.3} are again lurking behind the scenes.

We are now also finally ready for the
\begin{proof}[Proof of Theorem \ref{T1.2}.]
In fact, exactly the same ideas let us go from Theorem \ref{T6.1} to its more general version Theorem \ref{T1.2}. It is still true that if $H\in\mathcal C[0,\infty)$
satisfies Hypothesis \ref{H1.1}, then $H\in\mathcal D$ if and only if $m_+(\infty)=i$. The argument is literally the same as in the proof of Theorem \ref{T3.2}.

Hence, as above, if an $H\in\mathcal C[0,\infty)$ satisfying Hypothesis \ref{H1.1} is given and $H\not\equiv P_{\alpha}$, then $g\cdot H\in\mathcal D$ for a unique
$g\in G$. We simply need to act by that $g\in G$ that corrects the value at infinity, that is, we need $g\cdot m_+(\infty; H)=i$.

Furthermore, since $g$ maps $\C^+$ back to itself when acting as a linear fractional transformation, $g\cdot H$ still satisfies Hypothesis \ref{H1.1}.

The bottom line is that an $H$ satisfying Hypothesis \ref{H1.1}, $H\not\equiv P_{\alpha}$,
is again of the form $H(x)=A^tH_d(x)A$, $H_d=T^tT$, with $T$ solving $JT'+WT=0$, and here $W$ is a Dirac potential of the type analyzed
in Theorem \ref{T6.1}. We conclude that $H_d$ and $H$ are indeed real analytic, with holomorphic continuations to $S$.

The bound on $H^{-1/2}H'H^{-1/2}$ from Theorem \ref{T1.2} is established in the same way as the analogous claim of Theorem \ref{T5.2}.
Of course, since we do not have a sharp bound here, there are no additional claims that can be made when the bound is attained.
\end{proof}
We conclude with a few remarks on other sets $U\subseteq\R$. Sets of the type $U=(-2,2)$ and $U=(0,\infty)$ were already considered
in \cite{HMcBR}, but only for Jacobi and Schr{\"o}dinger operators, respectively, not for general canonical systems. Of course,
we may expect an analog of Theorem \ref{T5.1} that lets one move from a general canonical system $H\in\mathcal R(U)$ to one of these more specialized operators
by acting by a suitable group element $A\in\PSL(2,\R)$, and, assuming this, \cite{HMcBR} could then be upgraded effortlessly to the more general setting.
This is essentially correct (and see again \cite{FR} also), but only
if a smaller (than $\mathcal R(U)$) class of canonical systems is considered. A natural choice, in the second case $U=(0,\infty)$, say, would be to
work only with canonical systems that are bounded below. It is in fact clear that something of this sort is needed because whether or not a canonical system
can be rewritten as a Schr{\"o}dinger equation is again decided by the large $z$ asymptotics of the $m$ functions \cite{Lev,LevSar,Mar,RemdB}, and
a spectral measure that is unbounded below can now certainly change the behavior of $m(z)$ near $z=\infty$ dramatically. This is also the approach
already taken in \cite{HMcBR}. Nothing like this was needed here, for Dirac operators, because now \textit{any }measure on $U^c=[-1,1]$ will only have a small effect
on the large $z$ asymptotics of $m(z)$.

Finally, one can consider completely general open sets $U\subseteq\R$. Then $\Omega=\C^+\cup U\cup\C^-$ will usually not be simply connected.
We can work with the universal cover $\varphi: \C^+\to\Omega$ as a substitute for the conformal map that is no longer available.
Now we obtain as $F$ functions $F(\lambda;H)=M(\varphi(\lambda))$, $H\in\mathcal R(U)$, exactly the \textit{automorphic }Herglotz functions,
that is, the functions invariant under the (Fuchsian) group $G\le\PSL(2,\R)$ of covering transformations. Though explicit calculations such
as the ones from Sections 4, 5 may become challenging, even in relatively simple cases,
it appears that method still has potential. This will be the subject of continuing research.

\end{document}